\documentclass[12pt]{amsart}
\usepackage[T1]{fontenc}
\usepackage[utf8]{inputenc}
\usepackage{lmodern}
\usepackage{amsmath,amssymb,mathtools}
\usepackage{aliascnt}
\usepackage{microtype}
\usepackage[a4paper,margin=30mm]{geometry}
\usepackage{xcolor}
\usepackage[
  colorlinks=true,
  linkcolor=blue,
  citecolor=blue,
  urlcolor=blue,
  filecolor=blue,
  linktoc=all,
  bookmarksopen=true,
  bookmarksnumbered=true,
  pdfpagemode=UseOutlines
]{hyperref}
\usepackage{bookmark}

\allowdisplaybreaks
\newtheorem{theorem}{Theorem}[section]
\newaliascnt{proposition}{theorem}
\newtheorem{proposition}[proposition]{Proposition}
\aliascntresetthe{proposition}
\newaliascnt{corollary}{theorem}
\newtheorem{corollary}[corollary]{Corollary}
\aliascntresetthe{corollary}
\newaliascnt{lemma}{theorem}
\newtheorem{lemma}[lemma]{Lemma}
\aliascntresetthe{lemma}
\newtheorem{maintheorem}{Theorem}

\theoremstyle{remark}
\newaliascnt{remark}{theorem}
\newtheorem{remark}[remark]{Remark}
\aliascntresetthe{remark}
\newaliascnt{example}{theorem}
\newtheorem{example}[example]{Example}
\aliascntresetthe{example}

\usepackage[noabbrev]{cleveref}

\crefname{theorem}{Theorem}{Theorems}
\Crefname{theorem}{Theorem}{Theorems}
\crefname{maintheorem}{Theorem}{Theorems}
\Crefname{maintheorem}{Theorem}{Theorems}
\crefname{proposition}{Proposition}{Propositions}
\Crefname{proposition}{Proposition}{Propositions}
\crefname{corollary}{Corollary}{Corollaries}
\Crefname{corollary}{Corollary}{Corollaries}
\crefname{lemma}{Lemma}{Lemmas}
\Crefname{lemma}{Lemma}{Lemmas}
\crefname{remark}{Remark}{Remarks}
\Crefname{remark}{Remark}{Remarks}
\crefname{example}{Example}{Examples}
\Crefname{example}{Example}{Examples}
\crefname{section}{Section}{Sections}
\Crefname{section}{Section}{Sections}
\crefname{subsection}{Section}{Sections}
\Crefname{subsection}{Section}{Sections}
\crefname{equation}{Equation}{Equations}
\Crefname{equation}{Equation}{Equations}

\newcommand{\C}{\mathbb C}
\newcommand{\Q}{\mathbb Q}
\newcommand{\Z}{\mathbb Z}
\newcommand{\bx}{\mathbf{x}}
\newcommand{\by}{\mathbf{y}}
\newcommand{\bz}{\mathbf{z}}
\newcommand{\Sp}{\operatorname{Sp}}
\newcommand{\Gr}{\operatorname{Gr}}
\newcommand{\im}{\operatorname{im}}
\newcommand{\coker}{\operatorname{coker}}
\newcommand{\Tj}{\mathrm{Tj}}
\newcommand{\St}{\mathrm{St}}
\newcommand{\HI}{\mathrm{HI}}
\newcommand{\eps}{\varepsilon}
\newcommand{\defect}{\mathcal E}
\newcommand{\level}{\operatorname{level}}
\newcommand{\boxplusop}{\mathbin{\boxplus}}
\newcommand{\ind}{\mathbf 1}

\title[Hodge--Tjurina numbers and Thom--Sebastiani formulas]
{Hodge--Tjurina Numbers, Tjurina Spectra, and Thom--Sebastiani Formulas}
\author{Shijie Zheng}
\address{Department of Mathematical Sciences, Tsinghua University, Beijing 100084, P. R. China}
\email{zheng-sj23@mails.tsinghua.edu.cn}
\author{Huaiqing Zuo}
\address{Department of Mathematical Sciences, Tsinghua University, Beijing 100084, P. R. China}
\email{hqzuo@mail.tsinghua.edu.cn}
\thanks{H. Zuo acknowledges support from NSFC (grant No. 12671056) and BJNSF (grant No. 1252009).}

\begin{document}
\begin{abstract}
We study Hodge--Tjurina numbers and the Tjurina subspectrum of isolated hypersurface singularities. We relate these invariants to Hodge ideals and microlocal $V$-filtrations and investigate their behavior under Thom--Sebastiani sums. For separated sums, we obtain a filtered defect formula and, in particular, give an affirmative answer to the question raised by Almir\'on concerning the Thom--Sebastiani property of the Tjurina subspectrum: it holds if and only if the Tjurina number is multiplicative. We also discuss rational sampling, spectral reconstruction, symmetry bounds, and examples illustrating the information captured and lost by integer Hodge--Tjurina data.
\end{abstract}

\subjclass[2020]{Primary 14B05; Secondary 32S05, 32S35}
\keywords{Hodge ideals, Tjurina algebra, microlocal $V$-filtration, Tjurina subspectrum, Thom--Sebastiani formula}
\maketitle
\tableofcontents

\section{Introduction}

Hodge ideals and the spectrum provide two important Hodge-theoretic invariants. Musta\c{t}\u{a}--Popa introduced Hodge ideals for reduced divisors \cite{MP-HI} and extended them to effective $\Q$-divisors \cite{MP-QBir}. Hodge ideals extend multiplier ideals and measure the singularities of a divisor through the filtration.

Let $R=\C\{x_1,\ldots,x_n\}$ with $n\ge2$ denote the ring of convergent power series in $n$ variables, and let $f\in R$ be the germ of a holomorphic function defining an isolated hypersurface singularity. Let $Z_f=(f=0)$. For $\lambda\in\Q\cap(0,1]$, we denote by $I_p(\lambda Z_f)$ the $p$-th Hodge ideal of the effective $\Q$-divisor $\lambda Z_f$. The precise $\Q$-divisor definition used here is recalled in \cref{sec:JKSY}.

The spectrum is another fundamental invariant of an isolated hypersurface singularity. This invariant was first defined by Steenbrink; see \cite{Steenbrink77,Steenbrink89} for details. Let $J_f=\left(\frac{\partial f}{\partial x_1},\ldots,\frac{\partial f}{\partial x_n}\right)$
denote the Jacobian ideal. We call $A_f:=R/J_f$ the Milnor algebra and $T_f:=R/(f,J_f)$ the Tjurina algebra of $f$. Their dimensions $\mu_f:=\dim_{\C}A_f$ and $\tau_f:=\dim_{\C}T_f$
are the Milnor number and Tjurina number of $f$, respectively. We will review these algebraic preliminaries in \cref{sec:prelim}.

Then the spectrum of $f$ is $\Sp_f(t)=\sum_{i=1}^{\mu_f} t^{\alpha^{\St}_{f,i}},$ where the positive rational numbers $\alpha^{\St}_{f,i}$ are the spectral numbers (or exponents). See \cref{sec:JKSY}, especially \eqref{eq:St-mult}, for precise definitions and details. In Steenbrink's original definition, these rational numbers come from the mixed Hodge structure on the vanishing cohomology; see \cite{Steenbrink77,Steenbrink89} for details. Jung--Kim--Saito--Yoon (JKSY) \cite{JKSY} used Hodge ideals together with the microlocal $V$-filtration to define the Hodge-ideal spectrum and the Tjurina subspectrum, and compared them with Steenbrink's spectrum. Here we write $\Sp_f^{\Tj}(t)=\sum_{i=1}^{\tau_f}t^{\alpha^{\Tj}_{f,i}}$ for the JKSY Tjurina subspectrum. The subspectrum is obtained by passing through the Hodge-ideal filtration to the Tjurina algebra; see \cite{JKSY}. See \cref{sec:JKSY}, especially \eqref{eq:Tj-mult}, for precise definitions.

In this paper, we focus on the rationally indexed Hodge--Tjurina quotient. For $c\in\Q_{>0}$, write uniquely $c=p+\lambda$ with $p\in\Z_{\ge0}$ and $\lambda\in\Q\cap(0,1]$, and define
\begin{equation}\label{eq:Hdef}
 H_f(c):=\dim_{\C}\frac{R}{I_p(\lambda Z_f)+(f,J_f)}.
\end{equation}
We call $H_f(c)$ the rational Hodge--Tjurina number at $c$, and set $H_f(c)=0$ for $c\le0$. For the integral divisor $D=Z_f$, we write
\[
 m_k(f):=H_f(k+1)=\dim_{\C}\frac{R}{I_k(Z_f)+(f,J_f)}.
\]

This construction is closely related to the Hodge moduli algebras studied by Wang--Yau--Zuo \cite[Definition~2.2]{WYZ}. For $D_\lambda=\lambda Z_f$ and $p\in\Z_{\ge0}$, they consider
\[
 M_p(D_\lambda):=\frac{R}{(f)+I_p(D_\lambda)},
 \qquad
 m_p(D_\lambda):=\dim_{\C}M_p(D_\lambda).
\]
Our quotient is obtained by imposing the Jacobian ideal as well:
\[
 M_p(D_\lambda)=\frac{R}{(f)+I_p(D_\lambda)}
 \twoheadrightarrow
 \frac{R}{I_p(D_\lambda)+(f,J_f)}.
\]
Thus $H_f(p+\lambda)$ should be viewed as the Tjurina reduction of the Hodge moduli algebra at $(p,\lambda)$. The extra quotient by $J_f$ has the advantage that the images of all Hodge ideals $I_p(D_\lambda)$ are considered inside the same finite-dimensional Tjurina algebra $T_f=R/(f,J_f)$. Hence the family of quotients defining $H_f(p+\lambda)$ can be interpreted through a single filtration on $T_f$, leading to the strict spectral cutoff formula below.

\begin{maintheorem}\label{thm:introA}
For every $c\in\Q_{>0}$, written uniquely as $c=p+\lambda$ with $p\in\Z_{\ge0}$ and $\lambda\in\Q\cap(0,1]$, we have
\begin{equation}\label{eq:count-intro}
 H_f(c):=\dim_{\C}\frac{R}{I_p(\lambda Z_f)+(f,J_f)}
 =\#\{i:\alpha^{\Tj}_{f,i}<c\}.
\end{equation}
\end{maintheorem}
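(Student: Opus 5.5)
We will deduce the count from the JKSY construction of the Tjurina subspectrum recalled in \cref{sec:JKSY}. The central input is the description of Hodge ideals of $\Q$-divisors for isolated singularities through the microlocal $V$-filtration $\widetilde V^\bullet$ on $R$. For $c=p+\lambda$ with $\lambda\in\Q\cap(0,1]$ one has, modulo $(f)$ (and hence modulo $(f,J_f)$),
\[
 I_p(\lambda Z_f)+(f,J_f)=\widetilde V^{\,c}R+(f,J_f),
\]
where $\widetilde V^{c}R$ is the image of the microlocal $V$-filtration. This is the Saito / Musta\c{t}\u{a}--Popa comparison of $I_p(\lambda Z_f)$ with $\widetilde V^{p+\lambda}\mathcal O_X$ modulo $(f)$. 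We will quote it in exactly this form from \cref{sec:JKSY}, including its $\Q$-divisor normalisation $c=p+\lambda$.

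With this identity, \eqref{eq:Hdef} becomes $H_f(c)=\dim_\C T_f/\widetilde V^{c}T_f$, where $\widetilde V^{\bullet}T_f$ denotes the filtration induced on $T_f$ by the images of $\widetilde V^{\bullet}R$. It is decreasing and exhaustive, and since $T_f$ is finite dimensional it has finitely many jumps. By the JKSY definition \eqref{eq:Tj-mult}, the multiplicity of $\alpha$ in $\Sp^{\Tj}_f$ is
\[
 \dim_\C \Gr^\alpha_{\widetilde V}T_f=\dim\widetilde V^{\alpha}T_f/\widetilde V^{>\alpha}T_f.
\]
Hence $\dim T_f/\widetilde V^{c}T_f=\sum_{\alpha<c}\dim\Gr^\alpha T_f=\#\{i:\alpha^{\Tj}_{f,i}<c\}$. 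To justify this we need the filtration to be left continuous at the jumps, i.e. $\widetilde V^{c}=\bigcap_{\alpha<c}\widetilde V^{\alpha}$. This holds because the $V$-filtration is indexed discretely: the jumps lie in a finite subset of $\Q$ determined by the roots of the microlocal $b$-function. Finally, $\widetilde V^{\alpha}T_f=T_f$ for $\alpha\le0$, which also matches the convention $H_f(c)=0$ for $c\le0$. Summing graded pieces and telescoping then gives \eqref{eq:count-intro}, with $\tau_f$ total mass.

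The main obstacle is the first step, namely that the identity holds modulo $(f,J_f)$ for every rational $\lambda\in(0,1]$ and every $p$, with no restriction such as $p\le$ some bound depending on $\tilde\alpha_f$. One would first try to use Saito's result $I_p(\lambda Z_f)\equiv\widetilde V^{p+\lambda}\mathcal O_X \bmod (f)$ for isolated (or weighted homogeneous) singularities, as used by JKSY to define the Hodge-ideal spectrum. If only an inclusion is available in general, we would instead reduce modulo $J_f$. Since $\partial_i f$ acts through $\partial_t$ on the microlocalisation, the discrepancy terms lie in $(f)+J_f\cdot(\text{lower Hodge pieces})$, and these vanish in $T_f$. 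The remaining check is the strict inequality: the cutoff is "$<c$" rather than "$\le c$" because $\widetilde V^{c}$ contains the degree-$c$ piece itself. This matches the convention in \eqref{eq:Tj-mult}.
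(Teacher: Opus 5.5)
Your argument is correct and is essentially the paper's proof. The Musta\c{t}\u{a}--Popa comparison $I_p(\lambda Z)+(f)=\widetilde V_f^{p+\lambda}R+(f)$ identifies the image of $I_p(\lambda Z_f)$ in $T_f$ with the level-$c$ step of the filtration defining $\Sp^{\Tj}_f$, and counting graded pieces of a finite left-continuous filtration then gives the strict cutoff. The paper states \eqref{eq:Tj-mult} via $V_{\HI}$, so it writes out one step you leave implicit: the same comparison, together with monotonicity of $\widetilde V_f$, collapses $\sum_{q+\alpha\ge c}I_q(\alpha Z)$ to $I_p(\lambda Z)$ modulo $(f,J_f)$. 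Your fallback for a mere inclusion is not needed, since the comparison is an equality for all $p\ge0$.
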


Thus rational Hodge--Tjurina numbers are exactly strict rational cutoff counts for the Tjurina spectrum.

We now turn to the Thom--Sebastiani problem for the Hodge--Tjurina numbers introduced above. The definition of $H_f(c)$ is closely related to that of the Tjurina number $\tau_f=\dim_{\C}\frac{R}{(f,J_f)},$ in the sense that $H_f(c)$ measures the Hodge-theoretic refinement of the Tjurina algebra at a given rational level. The classical Thom--Sebastiani formula gives $\mu_{f+g}=\mu_f\mu_g$ where $f$ and $g$ are isolated hypersurface singularities in disjoint sets of variables. For the Tjurina number, however, the analogous product formula $\tau_{f+g}=\tau_f\tau_g$ generally fails. Almir\'on \cite{Almiron} studied the discrepancy $\tau_{f+g}-\tau_f\tau_g$. In the present paper, we describe this defect through a filtered space $K_{f,g}$; see \cref{sec:filteredST} for details. Its dimension recovers the numerical discrepancy, while the induced filtration records the contribution at each spectral level.

More generally, Thom--Sebastiani phenomena have been extensively studied for Hodge-theoretic invariants. The classical theorem of Sebastiani--Thom and Scherk--Steenbrink describes the spectrum of a separated sum \cite{ST,ScherkSteenbrink}, while Maxim--Saito--Sch\"urmann established a microlocal filtered version \cite{MSS}. Related tensor product phenomena also appear for Bernstein--Sato polynomials; see Shi--Zuo \cite{ShiZuo}.

Motivated by these results, we study the Thom--Sebastiani behavior of the rational Hodge--Tjurina numbers $H_f(c)$. From \cref{sec:ST} onward, let $\bx=(x_1,\ldots,x_n)$ and $\by=(y_1,\ldots,y_m)$ with $n,m\ge2$, let $f\in R_f=\C\{\bx\}$ and $g\in R_g=\C\{\by\}$ define isolated hypersurface singularities in disjoint variables, and set
\[
 F(\bx,\by)=f(\bx)+g(\by)\in\C\{\bx,\by\}.
\]
For $h=f,g,F$, we use the analogous notation $J_h,A_h,T_h,\mu_h,\tau_h,H_h(c)$, and $m_k(h)$. Separation of variables gives $A_F\simeq A_f\otimes_{\C}A_g$, while the classical Thom--Sebastiani theorem gives $\Sp_F(t)=\Sp_f(t)\Sp_g(t)$. We will review this part in \cref{sec:ST}.

At the Tjurina level, however, the class of the equation has to be killed. In $A=A_f\otimes_{\C}A_g$, put $u=[f]\otimes1$ and $v=1\otimes[g]$, where $[f]$ and $[g]$ denote the classes of $f$ and $g$ in the Milnor algebras. Then $T_F=A/(u+v)A$ and $T_f\otimes T_g=A/(uA+vA)$, so the failure of the naive Tjurina tensor product is measured by the canonical kernel
\begin{equation}\label{eq:Kintro}
 K_{f,g}:=\frac{uA+vA}{(u+v)A}=\ker\bigl(T_F\twoheadrightarrow T_f\otimes T_g\bigr).
\end{equation}
Almir\'on's work describes the resulting discrepancy at the level of the ordinary Tjurina number. We lift this numerical defect to the filtered and spectral levels by realizing it as the canonical kernel $K_{f,g}$ and equipping this kernel with the filtration induced from the microlocal quotient filtration on $T_F$. The total dimension of $K_{f,g}$ is then described in terms of the Jordan-block data of multiplication by the classes $[f]$ and $[g]$, while the induced filtration records how this defect is distributed among the spectral levels. See \eqref{eq:Kfiltration} for details. Here we set
\[
 \defect_{f,g}(t)=\sum_{\gamma}\dim_{\C}\Gr_V^{\gamma}K_{f,g}\,t^{\gamma},
 \qquad
 \eps_{f,g}=\dim_{\C}K_{f,g}=\defect_{f,g}(1).
\]
By a rational-exponent polynomial, we mean a finite sum $P(t)=\sum_{\gamma\in\Q}p_\gamma t^\gamma$ with $p_\gamma\in\mathbb{R}$. For two such polynomials, $P\preceq Q$ means coefficientwise inequality. Our convention for weighted homogeneity is given in \cref{sec:prelim}.

For an isolated hypersurface singularity $h$, let $N_h:A_h\to A_h$ be the multiplication map induced by the class $[h]$. We write
\[
 b_j(h):=\dim_{\C}\bigl(N_h^{j-1}A_h/N_h^jA_h\bigr),\qquad j\ge1,
\]
so that $b_j(h)$ is the number of Jordan blocks of $N_h$ of length at least $j$. This Jordan-block description is developed in \cref{sec:ST}.

\begin{maintheorem}\label{thm:introB}
For arbitrary $f,g,F$ as above,
\begin{equation}\label{eq:introA}
 \Sp_F^{\Tj}(t)=\Sp_f^{\Tj}(t)\Sp_g^{\Tj}(t)+\defect_{f,g}(t),
 \qquad \defect_{f,g}(t)\succeq0.
\end{equation}
In particular,
\[
 \Sp_f^{\Tj}(t)\Sp_g^{\Tj}(t)\preceq\Sp_F^{\Tj}(t)\preceq\Sp_f(t)\Sp_g(t).
\]
Taking dimensions in the filtered defect formula gives
\begin{equation}\label{eq:intro-eps}
 \eps_{f,g}=\tau_F-\tau_f\tau_g=\sum_{j\ge2}b_j(f)b_j(g).
\end{equation}
The exact product $\Sp_F^{\Tj}(t)=\Sp_f^{\Tj}(t)\Sp_g^{\Tj}(t)$ holds if and only if $f\in J_f$ or $g\in J_g$. If $a_h=\min_i\alpha^{\St}_{h,i}$ for $h=f,g$, then every exponent $\gamma$ occurring with nonzero coefficient in $\defect_{f,g}(t)$ satisfies $\gamma\ge a_f+a_g+1$.
\end{maintheorem}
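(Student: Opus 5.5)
The argument runs through the microlocal $V$-filtration on the Milnor algebra together with the filtered Thom--Sebastiani theorem of Maxim--Saito--Sch\"urmann \cite{MSS}. By the JKSY description recalled in \cref{sec:JKSY}, the Tjurina subspectrum of $h$ is the Poincaré series of $\Gr_V$ of $T_h$, where $T_h$ carries the quotient filtration of $V$ on $A_h$; similarly $\Sp_h$ is the Poincaré series of $\Gr_V A_h$. The first step is to invoke \cite{MSS}: under $A_F\simeq A_f\otimes A_g$, the microlocal $V$-filtration on $A_F$ is the tensor (convolution) filtration $V^{\gamma}A_F=\sum_{\alpha+\beta=\gamma}V^\alpha A_f\otimes V^\beta A_g$. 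I would also use that multiplication by $[h]$ shifts $V$ by exactly one, $[h]\,V^\alpha A_h\subset V^{\alpha+1}A_h$; this is the microlocal $\partial_t^{-1}$-compatibility. It follows that the ideals $uA$, $vA$ and $(u+v)A$ are filtered subspaces.

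The second step is the defect formula. The exact sequence $0\to K_{f,g}\to T_F\to T_f\otimes T_g\to0$ is strict once all three spaces carry the filtrations induced from $A$, because $T_F$ and $T_f\otimes T_g$ are both quotients of the filtered space $A$. We have $\Gr_V$ of $T_f\otimes T_g=A/(uA+vA)$ equal to $\Gr_V T_f\otimes\Gr_V T_g$, since quotients of a tensor filtration by $V$-compatible subspaces of the factors are tensor filtrations; I would check this with compatible splittings, choosing bases of $A_f,A_g$ adapted to $V$ and to the image of $N_f$, $N_g$. Additivity of $\Gr$ along strict sequences then gives \eqref{eq:introA} with $\defect_{f,g}\succeq0$. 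The lower bound in the "in particular" statement follows immediately, and the upper bound comes from $T_F$ being a quotient of $A_F$ together with $\Sp_F=\Sp_f\Sp_g$.

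The third step computes the dimension. I would put $N_f$ and $N_g$ in Jordan normal form; both are nilpotent, since $f\in\sqrt{J_f}$. The space $A$ then decomposes as a sum of $\C[u,v]/(u^a,v^b)$ over pairs of Jordan blocks of lengths $a,b$. On each such piece,
\[
\dim\frac{(u,v)}{(u+v)}=\dim\frac{\C[u,v]}{(u^a,v^b,u+v)}-1=\min(a,b)-1.
\]
Summing over pairs gives $\sum_{a,b}(\min(a,b)-1)=\sum_{j\ge2}b_j(f)b_j(g)$. By the same count, the total is zero exactly when all blocks of $N_f$ or all blocks of $N_g$ have length $1$, that is, when $N_f=0$ or $N_g=0$, equivalently $f\in J_f$ or $g\in J_g$. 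The formula $\eps_{f,g}=\tau_F-\tau_f\tau_g$ is the dimension count of the exact sequence.

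For the exponent bound, I would observe that $K_{f,g}$ is spanned by the image of $uA=(u\otimes1)(A_f\otimes A_g)$. Modulo $(u+v)A$ this equals the image of $vA$, so $K$ lies in the image of $uA\cap vA+(u+v)A$, and classes in $K$ have representatives in $N_fA_f\otimes N_gA_g\subset V^{a_f+1}A_f\otimes V^{a_g+1}A_g$. That would give the even stronger bound $a_f+a_g+2$, which makes me suspect the claimed $+1$ comes from a weaker argument: one factor gives a shift of $1$ and the other factor is bounded below only by $a$. I would prove the safe version first: every element of $K$ is represented in $uA\subset V^{a_f+1+a_g}$, so $\Gr^\gamma_V K\ne0$ forces $\gamma\ge a_f+a_g+1$. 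The main obstacle is the strictness and compatible-splitting claim in the second step, namely that $\Gr_V$ commutes with forming $A/(uA+vA)$ as a tensor product. This requires bases of $A_h$ adapted simultaneously to $V$ and to $N_h A_h$. I would try to obtain them from the MSS theorem's filtered isomorphism, together with the fact that $\Gr_V$ of the $N_h$-action behaves like a degree-one graded map.
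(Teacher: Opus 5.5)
Your route is the paper's route. You use the MSS convolution filtration on $A_F\simeq A_f\otimes A_g$, the strict sequence $0\to K_{f,g}\to T_F\to T_f\otimes T_g\to0$ with all filtrations induced from $A$, and additivity of $\Gr_V$. You finish with a Jordan-block count and with $[h]V^\alpha A_h\subseteq V^{\alpha+1}A_h$ for the support bound. Counting $\dim K_{f,g}$ per pair of blocks as $\min(a,b)-1$ is equivalent bookkeeping to the paper's method, which computes $\tau_F$ and subtracts $\tau_f\tau_g$. Two points need correcting.

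First, the step you call the main obstacle is not an obstacle. You should not try to extract adapted bases from MSS or from properties of $\Gr_V N_h$. The latter would import a strictness property of multiplication by $[h]$ that the argument never needs; compare \cref{rem:kernel-filtration}. Let $p_h:A_h\to T_h$ be the quotient maps. By linearity,
\[
 (p_f\otimes p_g)\bigl(\textstyle\sum_{a+b\ge c}V^aA_f\otimes V^bA_g\bigr)
 =\textstyle\sum_{a+b\ge c}p_f(V^aA_f)\otimes p_g(V^bA_g)
 =\textstyle\sum_{a+b\ge c}V^aT_f\otimes V^bT_g .
\]
This uses no hypothesis at all on the subspaces $N_hA_h$; it is \cref{lem:tensor-strict}. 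The splitting of $\Gr_V(T_f\otimes T_g)$ into $\bigoplus_{a+b=c}\Gr_V^aT_f\otimes\Gr_V^bT_g$ then only needs bases of $T_f$ and $T_g$ adapted to their own filtrations (\cref{lem:conv-gr}). If you prefer your splitting route, bases adapted simultaneously to $V$ and to an arbitrary subspace always exist by elementary linear algebra. Take an adapted basis of the subspace with its induced filtration, and adjoin lifts of an adapted basis of the quotient.

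Second, your aside about an ``even stronger bound'' $a_f+a_g+2$ is false, and so is the inference behind it. Put $W=(u+v)A$. From $uA+W=vA+W$ you cannot conclude $uA+W=(uA\cap vA)+W$, because the lattice of subspaces is modular but not distributive. \Cref{ex:nonzero-defect} refutes the claim. There $uA\cap vA=\C\,s\otimes s$, and this lies in $(u+v)A$ because $(u+v)(s\otimes1)=-\frac15\,s\otimes s$. Nevertheless $K_{h,h}\neq0$. Moreover $\defect_{h,h}(t)=t^2$ with $a_h=2/5$, so the exponent $2$ lies below $a_f+a_g+2=14/5$. Your ``safe version'' is correct and is exactly the paper's argument. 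The image of $uA$ is all of $K_{f,g}$, and $uA\subseteq V^{a_f+1}A_f\otimes V^{a_g}A_g\subseteq V^{a_f+a_g+1}A$ because $A_g=V^{a_g}A_g$. The $+1$ in the statement is therefore not an artifact of a weak argument.
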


The numerical identity \eqref{eq:intro-eps} gives a Jordan-block description of the ordinary Tjurina defect and recovers Almir\'on's numerical Thom--Sebastiani analysis of $\tau$. The polynomial $\defect_{f,g}(t)$ further refines this formula by recording the filtration level of each contribution. In the notation of \cite{Almiron}, $e_{\mathrm{BS}}(h):=\min\{q\ge1:h^q\in J_h\}$. For two plane-curve factors, the numerical square-zero formula in \cref{cor:squarezero} agrees with Almir\'on's $e_{\mathrm{BS}}=2$ specialization. When one factor is weighted homogeneous, the correction term vanishes and the formula specializes to $\Sp_F^{\Tj}(t)=\Sp_f(t)\Sp_g^{\Tj}(t)$ when $f$ is the weighted-homogeneous factor.

The following corollary answers the question raised in Almir\'on's Remark~3 \cite{Almiron}.
\begin{corollary}\label{cor:intro-ST-Tj-spectrum}
The Tjurina subspectrum satisfies the Thom--Sebastiani property, namely
\[
 \Sp_F^{\Tj}(t)=\Sp_f^{\Tj}(t)\Sp_g^{\Tj}(t),
\]
if and only if
\[
 \tau_F=\tau_f\tau_g.
\]
\end{corollary}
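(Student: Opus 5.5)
The corollary should follow directly from \cref{thm:introB}, and I would prove it that way. The main input is the filtered defect formula \eqref{eq:introA},
\[
 \Sp_F^{\Tj}(t)=\Sp_f^{\Tj}(t)\Sp_g^{\Tj}(t)+\defect_{f,g}(t),\qquad \defect_{f,g}(t)\succeq0,
\]
combined with the dimension identity \eqref{eq:intro-eps}, $\eps_{f,g}=\defect_{f,g}(1)=\tau_F-\tau_f\tau_g$. Both implications then reduce to comparing a nonnegative polynomial with its value at $t=1$.

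\emph{Forward direction.} Suppose $\Sp_F^{\Tj}(t)=\Sp_f^{\Tj}(t)\Sp_g^{\Tj}(t)$. Evaluate at $t=1$. By definition, $\Sp_h^{\Tj}(1)=\tau_h$ for $h=f,g,F$, since the Tjurina subspectrum has exactly $\tau_h$ terms. So the identity at $t=1$ reads $\tau_F=\tau_f\tau_g$. This direction needs only the definitions.

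\emph{Reverse direction.} Suppose $\tau_F=\tau_f\tau_g$. Then \eqref{eq:intro-eps} gives
\[
 \defect_{f,g}(1)=\sum_\gamma\dim_{\C}\Gr_V^\gamma K_{f,g}=0.
\]
Every coefficient $\dim_{\C}\Gr_V^\gamma K_{f,g}$ is a nonnegative integer, so each one must vanish. Hence $\defect_{f,g}(t)=0$ identically, and \eqref{eq:introA} yields the product formula.

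The only point needing care is that $\defect_{f,g}(1)$ really equals $\dim_{\C}K_{f,g}$. This holds because the induced $V$-filtration on the finite-dimensional space $K_{f,g}$ is exhaustive and separated, with finitely many jumps, so the graded dimensions sum to the total dimension. Both facts are already built into \cref{thm:introB}, so I expect no real obstacle.

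As a consistency check, \cref{thm:introB} also states that $\defect_{f,g}(t)=0$ if and only if $f\in J_f$ or $g\in J_g$. Via $\sum_{j\ge2}b_j(f)b_j(g)$, this matches $\tau_F=\tau_f\tau_g$ exactly: $b_2(h)=0$ precisely when $N_h=0$, that is, when $h\in J_h$.
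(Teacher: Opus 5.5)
Your proposal is correct and is essentially the paper's proof. Both arguments rest on the filtered defect formula: $\defect_{f,g}(t)$ has nonnegative integer coefficients and $\defect_{f,g}(1)=\dim_{\C}K_{f,g}=\tau_F-\tau_f\tau_g$, so it vanishes identically exactly when $\tau_F=\tau_f\tau_g$. Your forward direction, which evaluates the product formula directly at $t=1$, is only a cosmetic variant of the paper's step ``product formula $\iff\defect_{f,g}=0$''.
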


The filtered defect formula also yields exact cutoff formulas and bounds for the rational Hodge--Tjurina numbers. Here let $V^cK_{f,g}$ denote the induced microlocal $V$-filtration step on the defect space and define $E_{f,g}(c)=\dim_{\C}\bigl(K_{f,g}/V^cK_{f,g}\bigr)$; see \cref{sec:ST,sec:filteredST} for details. We also write $m_{-1}(h)=0$ and $\Delta_r(h)=m_r(h)-m_{r-1}(h)$ for $h=f,g,F$.

\begin{maintheorem}\label{thm:introC}
With notions above, let
\[
 C_{f,g}(c)=\#\{(i,j):\alpha^{\Tj}_{f,i}+\alpha^{\Tj}_{g,j}<c\}.
\]
Then for every $c\in\Q_{>0}$, we have
\begin{equation}\label{eq:intro-cutoff}
 H_F(c)=C_{f,g}(c)+E_{f,g}(c).
\end{equation}
The cutoff error satisfies
\[
 0\le E_{f,g}(c)\le\eps_{f,g},\qquad
 E_{f,g}(c)=0\quad\text{if }c\le a_f+a_g+1.
\]
Moreover, for $m_k(F)$, we have
\begin{equation}\label{eq:intro-integer-bounds}
 \sum_{\substack{r,s\ge0\\ r+s\le k-1}}\Delta_r(f)\Delta_s(g)
 \le m_k(F)\le
 \sum_{\substack{r,s\ge0\\ r+s\le k}}\Delta_r(f)\Delta_s(g)+\eps_{f,g}.
\end{equation}
\end{maintheorem}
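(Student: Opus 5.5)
The plan is to derive \cref{thm:introC} from \cref{thm:introA} applied to $F$, combined with the filtered defect formula of \cref{thm:introB}.

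\textbf{Step 1: the cutoff identity.} By \cref{thm:introA} for $F$,
\[
H_F(c)=\#\{k:\alpha^{\Tj}_{F,k}<c\}=\sum_{\gamma<c}[t^\gamma]\Sp_F^{\Tj}(t).
\]
Substituting \eqref{eq:introA} splits this coefficient sum into two pieces. The contribution of $\Sp_f^{\Tj}\Sp_g^{\Tj}$ is exactly $C_{f,g}(c)$. The contribution of $\defect_{f,g}$ is $\sum_{\gamma<c}\dim\Gr_V^\gamma K_{f,g}$. Since the filtration on $K_{f,g}$ is induced from a finite, exhaustive, decreasing filtration on $T_F$, the quotient $K_{f,g}/V^cK_{f,g}$ has dimension equal to that sum, so it is $E_{f,g}(c)$. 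This gives \eqref{eq:intro-cutoff}.

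\textbf{Step 2: bounds on $E_{f,g}(c)$.} The bounds $0\le E_{f,g}(c)\le\dim K_{f,g}=\eps_{f,g}$ are immediate. For the vanishing statement, \cref{thm:introB} says every exponent $\gamma$ of $\defect_{f,g}$ satisfies $\gamma\ge a_f+a_g+1$. Hence if $c\le a_f+a_g+1$, no $\gamma<c$ contributes, and $E_{f,g}(c)=0$.

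\textbf{Step 3: reduction of the integer bounds.} Set $c=k+1$, so $m_k(F)=C_{f,g}(k+1)+E_{f,g}(k+1)$. Since $0\le E\le\eps_{f,g}$, it suffices to show
\[
\sum_{r+s\le k-1}\Delta_r(f)\Delta_s(g)\le C_{f,g}(k+1)\le\sum_{r+s\le k}\Delta_r(f)\Delta_s(g).
\]
By \cref{thm:introA} for $h=f,g$, $m_r(h)=\#\{i:\alpha^{\Tj}_{h,i}<r+1\}$. Hence $\Delta_r(h)$ counts the exponents with $\alpha^{\Tj}_{h,i}\in[r,r+1)$. All exponents are positive, so $\Delta_0$ counts those in $(0,1)$, and the total over all $r$ is $\tau_h$ because every Tjurina exponent is a Steenbrink exponent and these lie in $(0,n)$.

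\textbf{Step 4: bucket comparison.} Group the pairs $(i,j)$ by the buckets $r=\lfloor\alpha^{\Tj}_{f,i}\rfloor$ and $s=\lfloor\alpha^{\Tj}_{g,j}\rfloor$. The sum then satisfies $r+s\le\alpha_{f,i}+\alpha_{g,j}<r+s+2$.
\begin{itemize}
\item If $r+s\le k-1$, then $\alpha_{f,i}+\alpha_{g,j}<k+1$, so the pair is counted in $C_{f,g}(k+1)$. This gives the lower bound.
\item Conversely, if $\alpha_{f,i}+\alpha_{g,j}<k+1$, then $r+s<k+1$, so $r+s\le k$. This gives the upper bound.
\end{itemize}

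The main point needing care is Step 1. One must check that the filtration on $K_{f,g}$ defining $\defect_{f,g}$ in \eqref{eq:Kfiltration} is the same one whose steps $V^cK_{f,g}$ define $E_{f,g}$. One must also check that the graded pieces are indexed so that strict inequality $\gamma<c$ corresponds to $K/V^c$, consistent with the strict cutoff in \cref{thm:introA}. Once the left-continuity convention of $V$ is fixed, everything else is bookkeeping.
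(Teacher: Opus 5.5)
Your proposal is correct and follows essentially the same route as the paper. The paper likewise obtains the cutoff identity by summing the filtered defect formula over exponents strictly below $c$ via the counting theorem, identifying $\sum_{\gamma<c}\dim\Gr_V^\gamma K_{f,g}$ with $\dim K_{f,g}/V^cK_{f,g}$ and using the support bound for the vanishing, as in \cref{cor:cutoff}. It also derives the integer bounds from the same unit-block partition by $(\lfloor\alpha^{\Tj}_{f,i}\rfloor,\lfloor\alpha^{\Tj}_{g,j}\rfloor)$ combined with $0\le E_{f,g}\le\eps_{f,g}$, as in \cref{thm:integer-general}.
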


The exact cutoff formula and the integer-bin bounds are proved in \cref{cor:cutoff,thm:integer-general}. There are two different sources of error. One comes from using only integer cutoffs, which loses the exact fractional positions of the spectral numbers. The other comes from the Thom--Sebastiani defect measured by $K_{f,g}$. \Cref{prop:rational-recovery} shows that a known common denominator recovers the full Tjurina spectrum from finitely many rational Hodge--Tjurina numbers.

\crefrange{sec:prelim}{sec:counting} establish the algebraic, filtered, and spectral preliminaries and prove the counting interpretation of $H_f(c)$. \crefrange{sec:ST}{sec:filteredST} develop the numerical and filtered Thom--Sebastiani formulas. \Cref{sec:bounds} derives integer and rational bounds, finite rational recovery, and the symmetry estimate for weighted-homogeneous self sums. \Cref{sec:nonrecover} contains further examples illustrating the information lost when only integer Hodge--Tjurina data are retained.

\section{Filtered preliminaries for Milnor and Tjurina algebras}\label{sec:prelim}

\subsection{Milnor and Tjurina algebras}
Retain the notation of the Introduction and write $R=\C\{\bx\}$. Let
$\mathfrak m=(x_1,\ldots,x_n)$. Since $V(J_f)=\{0\}$ as a germ,
$\sqrt{J_f}=\mathfrak m$, so $A_f$ is Artinian. Hence $A_f$ and its quotient $T_f$ are
finite-dimensional over $\C$.
If $[f]$ denotes the class of $f$ in $A_f$, then
\(T_f\simeq A_f/[f]A_f.\)
Indeed, the image of $(f,J_f)$ in $A_f$ is the principal ideal $[f]A_f$, and hence
\[
 0\longrightarrow[f]A_f\longrightarrow A_f\longrightarrow T_f\longrightarrow0
\]
is exact, and
\begin{equation}\label{eq:mu-tau}
 \mu_f-\tau_f=\dim_{\C}[f]A_f.
\end{equation}

We say that $f$ is weighted homogeneous of weighted degree $d_0$ with positive weights $w_1,\ldots,w_n$ if every monomial $\bx^{\nu}$ occurring in $f$, with $\nu=(\nu_1,\ldots,\nu_n)\in\Z_{\ge0}^n\subset\mathbb R^n$ and $\bx^{\nu}:=x_1^{\nu_1}\cdots x_n^{\nu_n}$, satisfies
\(\sum_{i=1}^n w_i\nu_i=d_0.\)

\subsection{Finite decreasing filtrations and strict quotients}
In the finite-dimensional linear-algebra arguments below, all filtrations are decreasing,
exhaustive, separated, and left-continuous, with finitely many rational jumping indices.
For such a filtered finite-dimensional vector space $(M,V)$,
\[
 V^{\alpha}M=M\quad(\alpha\ll0),\qquad
 V^{\alpha}M=0\quad(\alpha\gg0),\qquad
 V^{\alpha}M=\bigcap_{\beta<\alpha}V^{\beta}M.
\]
This finiteness convention applies to filtrations on finite-dimensional spaces such as the
Milnor and Tjurina algebras and their quotients; it is not imposed on the ambient
microlocal $V$-filtration on the local ring. For a filtered vector space $(M,V)$, put
\[
 V^{>\alpha}M:=\bigcup_{\beta>\alpha}V^{\beta}M,
 \qquad
 \Gr_V^{\alpha}M:=V^{\alpha}M/V^{>\alpha}M.
\]
Because the filtration is decreasing, the union defining $V^{>\alpha}M$ is again a vector
subspace.

If $N\subset M$, the quotient filtration on $M/N$ is defined by
\(V^{\alpha}(M/N):=\frac{V^{\alpha}M+N}{N}.\)
Equivalently, if $\pi:M\twoheadrightarrow Q$ is a surjection and $Q$ is endowed with the
quotient filtration, then
\(V^{\alpha}Q=\pi(V^{\alpha}M)\) for \(\alpha\in\Q\).
With this quotient filtration, $\pi$ is strict. More generally, a filtered map $u:M\to N$ is
called \emph{strict} if
\(u(V^{\alpha}M)=u(M)\cap V^{\alpha}N\)
for every $\alpha\in\Q$.

\begin{lemma}\label{lem:strict-exact}
Suppose
\[
 0\longrightarrow M'\xrightarrow{i}M\xrightarrow{\pi}M''\longrightarrow0
\]
is a short exact sequence of filtered finite-dimensional vector spaces and both maps are
strict. Then, for every $\alpha\in\Q$,
\[
 0\longrightarrow \Gr_V^{\alpha}M'\xrightarrow{\Gr i}\Gr_V^{\alpha}M
 \xrightarrow{\Gr\pi}\Gr_V^{\alpha}M''\longrightarrow0
\]
is exact.
\end{lemma}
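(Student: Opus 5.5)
We verify exactness of the graded sequence directly at each of the three places, using strictness to move between $M$ and $M'$, $M''$. Throughout, the finite-jump convention gives an index $\beta>\alpha$ with $V^{>\alpha}=V^{\beta}$ on all three spaces simultaneously. This holds because each filtration has finitely many jumps, so $V^{>\alpha}$ is a single step $V^{\beta}$ for every $\beta$ in a small interval $(\alpha,\alpha+\delta)$. This reduces everything to statements about two steps $V^{\alpha}\supset V^{\beta}$.

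First we check that the graded maps are well defined. Since $i$ and $\pi$ are filtered, they send $V^{\alpha}$ into $V^{\alpha}$ and $V^{\beta}$ into $V^{\beta}$, so $\Gr i$ and $\Gr\pi$ make sense, and $\Gr\pi\circ\Gr i=0$ because $\pi\circ i=0$.

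Next come surjectivity of $\Gr\pi$ and injectivity of $\Gr i$.
\begin{itemize}
\item Surjectivity of $\Gr\pi$ follows from strictness of $\pi$ together with surjectivity: $\pi(V^{\alpha}M)=M''\cap V^{\alpha}M''=V^{\alpha}M''$.
\item For injectivity of $\Gr i$, take $x'\in V^{\alpha}M'$ with $i(x')\in V^{\beta}M$. Strictness of $i$ gives $i(x')\in i(M')\cap V^{\beta}M=i(V^{\beta}M')$. Since $i$ is injective, $x'\in V^{\beta}M'$.
\end{itemize}

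The remaining step, exactness in the middle, is the only one needing a small diagram chase. Take $x\in V^{\alpha}M$ with $\pi(x)\in V^{\beta}M''$. By strictness of $\pi$, $\pi(x)=\pi(y)$ for some $y\in V^{\beta}M$. Then $x-y\in\ker\pi\cap V^{\alpha}M=i(M')\cap V^{\alpha}M=i(V^{\alpha}M')$, using strictness of $i$. Hence $x-y=i(x')$ with $x'\in V^{\alpha}M'$, and the class of $x$ in $\Gr^{\alpha}_VM$ equals $\Gr i$ of the class of $x'$, because $y\in V^{\beta}M$.

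No step is a real obstacle. The only point needing care is the identification $V^{>\alpha}=V^{\beta}$, which lets us avoid working with unions. Alternatively, one can argue with $V^{>\alpha}$ directly: any element of the union lies in some single $V^{\beta}$, and the same chase applies.
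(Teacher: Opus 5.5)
Your proof is correct and is essentially the paper's diagram chase. Surjectivity of $\Gr\pi$ comes from strictness of $\pi$, injectivity of $\Gr i$ from strictness of $i$, and middle exactness comes from lifting $\pi(x)$ to some $y\in V^{\beta}M\subseteq V^{>\alpha}M$ and applying strictness of $i$ to $x-y$. The only cosmetic difference is that you fix one common $\beta$ with $V^{>\alpha}=V^{\beta}$ on all three spaces, which is valid already by finite-dimensionality, whereas the paper works with the union $V^{>\alpha}$ and chooses $\beta$ element by element.
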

\begin{proof}
Since $i$ and $\pi$ are strict, after identifying $M'$ with its image in $M$, we have
\[
 V^{\alpha}M'=M'\cap V^{\alpha}M,
 \qquad
 \pi(V^{\alpha}M)=V^{\alpha}M''
\]
for every $\alpha$. It follows that
\(V^{>\alpha}M'=M'\cap V^{>\alpha}M.\)
The strictness of $\pi$ immediately implies that
\(\Gr_V^{\alpha}M\longrightarrow \Gr_V^{\alpha}M''\)
is surjective. To prove exactness in the middle, let
\([x]\in\Gr_V^{\alpha}M\)
map to zero. Then $x\in V^{\alpha}M$ and
\(\pi(x)\in V^{>\alpha}M''.\)
Choose $\beta>\alpha$ such that $\pi(x)\in V^{\beta}M''$. By strictness of $\pi$, there
exists $y\in V^{\beta}M\subseteq V^{>\alpha}M$ such that
\(\pi(y)=\pi(x).\)
Hence
\(x-y\in\ker\pi=M'.\)
Since $x-y\in V^{\alpha}M$, strictness of $i$ gives
\(x-y\in M'\cap V^{\alpha}M=V^{\alpha}M'.\)
Since $y\in V^{>\alpha}M$,
\([x]=[x-y]\quad\text{in }\Gr_V^{\alpha}M.\)
Thus
\(\ker(\Gr\pi)=\im(\Gr i),\)
where the reverse inclusion follows from $\pi\circ i=0$.

Finally, if
\([x']\in\Gr_V^{\alpha}M'\)
maps to zero in $\Gr_V^{\alpha}M$, then
\(x'\in M'\cap V^{>\alpha}M=V^{>\alpha}M'.\)
Then we have $[x']=0$, so $\Gr i$ is injective. The associated-graded sequence is therefore exact.
\end{proof}

\subsection{Convolution filtrations on tensor products}
Let $(M,V)$ and $(N,V)$ be finite-dimensional filtered vector spaces. Endow
$M\otimes_{\C}N$ with the convolution filtration
\begin{equation}\label{eq:convolution}
 V^c(M\otimes N):=\sum_{a+b\ge c}V^aM\otimes V^bN.
\end{equation}
The following elementary lemma is the filtered-linear-algebra input used repeatedly in the
Thom--Sebastiani arguments below.

\begin{lemma}\label{lem:conv-gr}
For every $c\in\Q$, there is a natural isomorphism
\begin{equation}\label{eq:conv-gr}
 \Gr_V^c(M\otimes N)\simeq
 \bigoplus_{a+b=c}\Gr_V^aM\otimes\Gr_V^bN.
\end{equation}
Accordingly, if
\(P_{M,V}(t):=\sum_a\dim_{\C}\Gr_V^aM\,t^a,\)
then
\begin{equation}\label{eq:HP-product}
 P_{M\otimes N,V}(t)=P_{M,V}(t)P_{N,V}(t).
\end{equation}
\end{lemma}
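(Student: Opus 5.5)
The plan is to split each filtered space into graded pieces by choosing adapted bases, and then check that the convolution filtration becomes graded in the product basis. Since $M$ is finite-dimensional and its filtration has finitely many jumping indices $a_1>\dots>a_r$, I will choose complements $W^a_M\subset V^aM$ of $V^{>a}M$ for each jumping index $a$. Then $V^aM=\bigoplus_{a'\ge a}W^{a'}_M$, using left-continuity and the fact that only finitely many jumps occur, and $W^a_M\simeq\Gr^a_VM$. I will make the same choice $W^b_N$ for $N$. This gives $M\otimes N=\bigoplus_{a,b}W^a_M\otimes W^b_N$.

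The key step is to identify the convolution filtration in this decomposition, namely
\[
V^c(M\otimes N)=\bigoplus_{a+b\ge c}W^a_M\otimes W^b_N .
\]
The inclusion $\supseteq$ is clear, because $W^a_M\otimes W^b_N\subset V^aM\otimes V^bN$. For $\subseteq$, each generator $V^aM\otimes V^bN$ with $a+b\ge c$ equals $\bigoplus_{a'\ge a,\,b'\ge b}W^{a'}_M\otimes W^{b'}_N$, and every summand there has $a'+b'\ge c$. The same argument applied to $V^{>c}$ gives
\[
V^{>c}(M\otimes N)=\bigoplus_{a+b>c}W^a_M\otimes W^b_N .
\]
This uses that $V^{>c}$ is a finite union, hence equal to $V^{c'}$ for the next jump $c'>c$ of the finitely many sums $a+b$. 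Taking the quotient yields $\Gr^c_V(M\otimes N)\simeq\bigoplus_{a+b=c}W^a_M\otimes W^b_N\simeq\bigoplus_{a+b=c}\Gr^a_VM\otimes\Gr^b_VN$.

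For naturality, I will observe that the map $V^aM\otimes V^bN\to V^c(M\otimes N)\to\Gr^c_V$ for $a+b=c$ kills both $V^{>a}M\otimes V^bN$ and $V^aM\otimes V^{>b}N$, so it descends to a canonical map $\Gr^a_VM\otimes\Gr^b_VN\to\Gr^c_V(M\otimes N)$. The splitting computation above shows that the sum of these maps is an isomorphism, so the isomorphism does not depend on the choices of complements.

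The product formula \eqref{eq:HP-product} then follows by taking dimensions, since $\dim\Gr^c_V(M\otimes N)=\sum_{a+b=c}\dim\Gr^a_VM\,\dim\Gr^b_VN$, which is exactly the coefficient of $t^c$ in $P_{M,V}P_{N,V}$. The only delicate point is bookkeeping the inclusion $\subseteq$ and the description of $V^{>c}$. Both rest on finiteness of the jumps, which makes all unions and intersections stabilize.
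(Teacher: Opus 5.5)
Your proposal is correct and is essentially the paper's argument. The paper chooses bases adapted to the filtrations, which amounts to choosing your complements $W^a_M$, $W^b_N$. It then shows that the tensor basis is adapted to the convolution filtration with levels $a_i+b_j$, and checks that the canonical map $[m]\otimes[n]\mapsto[m\otimes n]$ carries a basis of $\bigoplus_{a+b=c}\Gr_V^aM\otimes\Gr_V^bN$ onto a basis of $\Gr_V^c(M\otimes N)$.
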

\begin{proof}
For $a,b\in\Q$, there is a canonical map
\[
 \Gr_V^aM\otimes\Gr_V^bN
 \longrightarrow
 \Gr_V^{a+b}(M\otimes N),
 \qquad
 [m]\otimes[n]\longmapsto[m\otimes n].
\]
It is well defined: if, for example, $m$ is changed by an element of $V^{>a}M$, then
the corresponding change in $m\otimes n$ belongs to $V^{>a+b}(M\otimes N)$, and
similarly for $n$. For each $c$, these maps therefore assemble to a natural morphism
\[
 \bigoplus_{a+b=c}\Gr_V^aM\otimes\Gr_V^bN
 \longrightarrow
 \Gr_V^c(M\otimes N).
\]
We show that it is an isomorphism by choosing bases adapted to the filtrations.

Let
\(e_1,\ldots,e_r\) and \(u_1,\ldots,u_s\)
be bases of $M$ and $N$, respectively, adapted to the filtrations. Write
\(\level(e_i)=a_i\) and \(\level(u_j)=b_j\).
By definition,
\[
 e_i\in V^{a_i}M\setminus V^{>a_i}M,
 \qquad
 u_j\in V^{b_j}N\setminus V^{>b_j}N,
\]
and the adapted-basis property gives
\[
 V^{\alpha}M=\operatorname{Span}\{e_i:a_i\ge\alpha\},
 \qquad
 V^{\beta}N=\operatorname{Span}\{u_j:b_j\ge\beta\}.
\]
To compute the convolution filtration explicitly, consider the tensor basis $\{e_i\otimes u_j\}_{i,j}$.
For each pair $(a,b)$,
\[
 V^aM\otimes V^bN
 =\operatorname{Span}\{e_i\otimes u_j:a_i\ge a,\ b_j\ge b\}.
\]
Therefore
\[
 V^c(M\otimes N)
 =\operatorname{Span}\{e_i\otimes u_j:a_i+b_j\ge c\}.
\]
Indeed, if $a_i\ge a$, $b_j\ge b$, and $a+b\ge c$, then
$a_i+b_j\ge a+b\ge c$. Conversely, if $a_i+b_j\ge c$, taking $a=a_i$ and
$b=b_j$ gives
\[
 e_i\otimes u_j\in V^{a_i}M\otimes V^{b_j}N\subseteq V^c(M\otimes N).
\]
Hence
\(\level(e_i\otimes u_j)=a_i+b_j,\)
and the tensor basis is adapted to the convolution filtration.

Now we have
\(\Gr_V^c(M\otimes N)=V^c(M\otimes N)/V^{>c}(M\otimes N).\)
A basis tensor $e_i\otimes u_j$ contributes a nonzero class precisely when its level is
$c$, that is, when $a_i+b_j=c$. Thus the classes
\([e_i\otimes u_j]\), with \(a_i+b_j=c\),
form a basis of $\Gr_V^c(M\otimes N)$. On the other hand, the classes $[e_i]$ with
$a_i=a$ form a basis of $\Gr_V^aM$, and the classes $[u_j]$ with $b_j=b$ form a basis
of $\Gr_V^bN$. For $a+b=c$, the tensors
\([e_i]\otimes[u_j]\), with \(a_i=a\) and \(b_j=b\),
form a basis of $\Gr_V^aM\otimes\Gr_V^bN$. The canonical map above identifies these
two bases and is therefore an isomorphism, proving \eqref{eq:conv-gr}.

Taking dimensions gives
\[
 \dim_{\C}\Gr_V^c(M\otimes N)
 =\sum_{a+b=c}
 \dim_{\C}\Gr_V^aM\,\dim_{\C}\Gr_V^bN.
\]
The Cauchy product formula then yields
\[
 \begin{aligned}
 P_{M\otimes N,V}(t)
 &=\sum_c\dim_{\C}\Gr_V^c(M\otimes N)t^c\\
 &=\left(\sum_a\dim_{\C}\Gr_V^aM\,t^a\right)
   \left(\sum_b\dim_{\C}\Gr_V^bN\,t^b\right),
 \end{aligned}
\]
which is \eqref{eq:HP-product}.
\end{proof}

\begin{lemma}\label{lem:tensor-strict}
Let $\pi:N\twoheadrightarrow Q$ be a strict filtered quotient, and endow $M\otimes N$
and $M\otimes Q$ with the convolution filtrations. Then
$\mathrm{id}_M\otimes\pi:M\otimes N\twoheadrightarrow M\otimes Q$ is strict, and
\begin{equation}\label{eq:tensor-strict}
 (\mathrm{id}_M\otimes\pi)\bigl(V^c(M\otimes N)\bigr)=V^c(M\otimes Q)=\sum_{a+b\ge c}V^aM\otimes V^bQ.
\end{equation}
\end{lemma}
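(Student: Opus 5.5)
The plan is to prove the two inclusions separately, using a basis of $N$ adapted to $V$ that is also compatible with $\ker\pi$. Since $\pi$ is strict, $K:=\ker\pi$ with the induced filtration $V^bK=K\cap V^bN$ is a filtered subspace.

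\textbf{Adapted basis.} Choose a basis of $N$ adapted to $V$ whose first part is a basis of $K$ adapted to $K\cap V^\bullet N$. To build it, choose for each jump $b$ complements:
\begin{itemize}
\item first extend a basis of $\Gr^b_V K$ (realized inside $V^bK$) to lifts of a basis of $\Gr_V^bN$;
\item this extension is possible because $\Gr^b_VK\hookrightarrow\Gr^b_VN$ is injective, exactly as in the injectivity step of \cref{lem:strict-exact}.
\end{itemize}
Call the remaining vectors $u_{j}$, with levels $b_j$. The images $\pi(u_j)$ then form a basis of $Q$.

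\textbf{Images are adapted.} Strictness gives $V^bQ=\pi(V^bN)=\operatorname{Span}\{\pi(u_j):b_j\ge b\}$, because the basis vectors of $K$ die under $\pi$. Hence $\{\pi(u_j)\}$ is an adapted basis of $Q$ with the same levels $b_j$.

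\textbf{Equality of images.} Fix an adapted basis $e_i$ of $M$ with levels $a_i$. The computation in the proof of \cref{lem:conv-gr} gives
\[
V^c(M\otimes N)=\operatorname{Span}\{e_i\otimes n:\text{$n$ a basis vector of level $b$ with $a_i+b\ge c$}\}
\]
and
\[
V^c(M\otimes Q)=\operatorname{Span}\{e_i\otimes\pi(u_j):a_i+b_j\ge c\}.
\]
Applying $\mathrm{id}_M\otimes\pi$ kills the tensors with $K$-basis vectors and sends $e_i\otimes u_j$ to $e_i\otimes\pi(u_j)$. So the image of $V^c(M\otimes N)$ is exactly $V^c(M\otimes Q)$. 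The second equality in \eqref{eq:tensor-strict} is just the definition \eqref{eq:convolution}.

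\textbf{Strictness.} Surjectivity of $\mathrm{id}_M\otimes\pi$ is clear. Strictness then follows because $u(V^c)=V^c(M\otimes Q)=\im u\cap V^c(M\otimes Q)$.

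\textbf{Main obstacle.} The only delicate step is building the simultaneous adapted basis. Once strictness identifies the filtration on $K$ with the induced one, it reduces to choosing complements level by level in the finite flag $V^\bullet N$, and this is routine finite-dimensional linear algebra. A more basis-free alternative is also available: the inclusion $\pi(V^bN)\supseteq V^bQ$ lets us lift each generator $m\otimes q$ with $m\in V^aM$ and $q\in V^bQ$ to $m\otimes n$ with $n\in V^bN$. This proves $\supseteq$ directly, and $\subseteq$ is immediate since $\pi(V^bN)\subseteq V^bQ$. I would present this simpler argument as the proof.
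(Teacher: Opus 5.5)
Your proposal is correct, and the basis-free argument you say you would present is exactly the paper's proof. Strictness plus surjectivity give $\pi(V^bN)=V^bQ$, so the image of $\sum_{a+b\ge c}V^aM\otimes V^bN$ is $\sum_{a+b\ge c}V^aM\otimes V^bQ=V^c(M\otimes Q)$, which gives both the equality and strictness; the adapted-basis construction is valid but unnecessary.
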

\begin{proof}
Since $\pi$ is strict and surjective,
\(\pi(V^bN)=V^bQ\)
for every $b$. The image of the $c$-th convolution level is therefore
\[
 \begin{aligned}
 (\mathrm{id}_M\otimes\pi)\bigl(V^c(M\otimes N)\bigr)
 &=\sum_{a+b\ge c}V^aM\otimes\pi(V^bN)\\
 &=\sum_{a+b\ge c}V^aM\otimes V^bQ.
 \end{aligned}
\]
The right-hand side is precisely the quotient/convolution filtration on $M\otimes Q$,
proving both \eqref{eq:tensor-strict} and strictness.
\end{proof}

\cref{lem:conv-gr,lem:tensor-strict} remove a potential strictness ambiguity used later: once a quotient is given its quotient filtration, tensoring with another finite filtered vector space produces exactly the convolution filtration above.

\section{The JKSY filtration and spectral conventions}\label{sec:JKSY}

The microlocal $V$-filtration used here goes back to Saito \cite{MSaito}. It is a decreasing
$\Q$-indexed filtration $\widetilde V_f^\alpha R$ on the local ring. We use the indexing
convention of JKSY; see \cite[\S1.2]{JKSY} and the references therein, and we do not
recall the construction here. Its quotient on the Milnor algebra is
\begin{equation}\label{eq:V-A}
V_f^\alpha A_f
:=
\frac{\widetilde V_f^\alpha R+J_f}{J_f}.
\end{equation}
The microlocal filtration $\widetilde V_f^\alpha$ is kept in the Musta\c{t}\u{a}--Popa/JKSY convention used in the comparison below; the finite filtrations are recorded with the left-continuous convention fixed in \cref{sec:prelim}.

To relate this filtration to the spectral $V$-filtration, let $\Omega_R^k$ denote the
$R$-module of holomorphic $k$-forms and set
$\Omega_f^n:=\Omega_R^n/(df\wedge\Omega_R^{n-1}).$
The chosen volume form
$d\mathbf{x}:=dx_1\wedge\cdots\wedge dx_n$
induces an isomorphism
\[
A_f \longrightarrow \Omega_f^n,
\qquad
[g]\longmapsto [g\,d\mathbf{x}].
\]
Under this identification, JKSY identify the quotient filtration above with the quotient
$V$-filtration on $\Omega_f^n$ coming from the Brieskorn lattice. Together with their
Brieskorn-lattice description of the spectrum, this gives the Steenbrink multiplicities;
see \cite[Introduction, (0.1), and Proposition 1.3]{JKSY}. With the positive-exponent
convention used throughout this paper, we have
\begin{equation}\label{eq:St-mult}
 \#\{i:\alpha^{\St}_{f,i}=\beta\}
 =\dim_{\C}\Gr_{V_f}^{\beta}A_f,
\end{equation}
and
\(\Sp_f(t)=\sum_{i=1}^{\mu_f}t^{\alpha^{\St}_{f,i}}.\)

Hodge ideals were introduced by Musta\c{t}\u{a} and Popa for reduced divisors
\cite{MP-HI} and for effective $\Q$-divisors in \cite{MP-QBir}. For the Tjurina
subspectrum, we follow JKSY directly. Let $\alpha\in\Q\cap(0,1]$, set
$D_\alpha:=\alpha Z$, and put $\beta:=1-\alpha$. In the local situation used here,
the original $\Q$-divisor definition is formulated using
\[
 M(f^{\beta}):=R[f^{-1}]f^{\beta},
\]
with
\[
 \partial_{x_i}(u f^{\beta})
 =\bigl(\partial_{x_i}u+\beta u f^{-1}\partial_{x_i}f\bigr)f^{\beta}.
\]
Since $f$ is reduced and $Z=\operatorname{div}(f)$, the $q$-th Hodge ideal is defined by
\begin{equation}\label{eq:Hodge-ideal-original}
 F_qM(f^{\beta})
 =I_q(D_\alpha)\otimes_R R((q+1)Z)f^{\beta}
 =I_q(D_\alpha)f^{-q-1}f^{\beta};
\end{equation}
see \cite[definition after Proposition~4.1]{MP-QBir}. For $\alpha=1$, this becomes
\(F_qR[f^{-1}]=I_q(Z)f^{-q-1}.\)
The later comparison with the microlocal $V$-filtration is due to Musta\c{t}\u{a}--Popa
\cite{MP-Q}. Following \cite[Introduction, (0.2)]{JKSY}, we use the non-strict condition $\alpha+q\ge\beta$ for the Hodge-ideal filtration. This is the left-continuous convention compatible with $\Gr_V^\beta=V^\beta/V^{>\beta}$ and with the strict cutoff in the counting formulas below. JKSY define the Hodge-ideal filtration on
$A_f$ by
\[
 V_{\HI}^{\beta}A_f
 :=\sum_{\alpha+q\ge\beta}I_q(\alpha Z)\bmod J_f,
\]
and use its quotient on $T_f$:
\begin{equation}\label{eq:VHI-T}
 V_{\HI}^{\beta}T_f
 =\frac{\sum_{\alpha+q\ge\beta}I_q(\alpha Z)+(f,J_f)}{(f,J_f)}.
\end{equation}
The Tjurina subspectrum is then defined by JKSY from the quotient filtration on
$R/(f,J_f)$; see \cite[Introduction, (0.2)--(0.4)]{JKSY}. The Tjurina spectral exponents are therefore characterized by
\begin{equation}\label{eq:Tj-mult}
 \#\{i:\alpha^{\Tj}_{f,i}=\beta\}
 =\dim_{\C}\Gr_{V_{\HI}}^{\beta}T_f,
\end{equation}
and
\(\Sp_f^{\Tj}(t)=\sum_{i=1}^{\tau_f}t^{\alpha^{\Tj}_{f,i}}.\)

\begin{remark}\label{rem:JKSY-convention}
For a weighted-homogeneous isolated singularity of weighted degree $1$, with positive
weights $w_1,\ldots,w_n$, JKSY use the weighted level
\[
 \ell_w(\nu):=\sum_{j=1}^n w_j(\nu_j+1),
 \qquad
 \bx^{\nu}=\prod_jx_j^{\nu_j}.
\]
We use the corresponding positive spectral convention. \cref{rem:rho} recalls the
Brieskorn-lattice calculation and the precise identification with the spectral numbers.
\end{remark}

\begin{example}\label{ex:cusp}
Let $f=x^2+y^3$, with normalized weights $w_x=1/2$ and $w_y=1/3$. Then
\(A_f=\C\{x,y\}/(x,y^2)\simeq\C[y]/(y^2)\)
has basis $1,y$. Under the Brieskorn-lattice description of the quotient $V$-filtration,
the corresponding classes $dx\wedge dy$ and $y\,dx\wedge dy$ have levels
\(\frac12+\frac13=\frac56\) and \(\frac12+\frac23=\frac76\).
Hence
\(\Sp_f(t)=t^{5/6}+t^{7/6}.\)

\end{example}

If $f$ is weighted homogeneous, Euler's identity gives $f\in J_f$, hence $T_f=A_f$.
The two quotient filtrations therefore agree, so the Tjurina and Steenbrink spectra coincide with multiplicities; compare \cite[Introduction, (0.5)]{JKSY}.

\section{The Hodge--Tjurina spectral counting theorem}\label{sec:counting}

The only external comparison needed for the counting argument comes from the
Hodge-ideal theory of Musta\c{t}\u{a}--Popa. Let
\[
D_\lambda:=\lambda Z=\lambda\,\operatorname{div}(f),
\qquad
\lambda\in\Q\cap(0,1].
\]
Following Musta\c{t}\u{a}--Popa \cite{MP-Q}, the modified Hodge ideal is
defined by
\[
I_p''(D_\lambda)
:=
I_p(D_\lambda)\,
\mathcal O_X\!\bigl(p(Z-\operatorname{div}(f))\bigr).
\]
Musta\c{t}\u{a}--Popa \cite[Theorem A]{MP-Q} prove the modulo-$(f)$ comparison for $p\ge1$. For $p=0$, we use the corresponding multiplier-ideal identity, also covered by \cite[Corollary~2.5]{JKSY}.
\begin{equation}\label{eq:MP-comparison}
I_p''(D_\lambda)+(f)
=
\widetilde V_f^{p+\lambda}R+(f),
\qquad
p\in\Z_{\ge0},\quad
\lambda\in\Q\cap(0,1].
\end{equation}

In the present paper, $f$ is reduced and $Z=\operatorname{div}(f)$.
Hence
\[
Z-\operatorname{div}(f)=0,
\]
so the correcting twist is trivial and therefore
\[
I_p''(D_\lambda)
=
I_p(D_\lambda)
=
I_p(\lambda Z).
\]
Consequently, \eqref{eq:MP-comparison} becomes
\[
I_p(\lambda Z)+(f)
=
\widetilde V_f^{p+\lambda}R+(f).
\]
Since $\lambda\in\Q\cap(0,1]$, we also have
$\lceil D_\lambda\rceil=Z$.
The comparison is only modulo $(f)$, which is exactly what is needed after
passing to the Tjurina quotient, where $f$ vanishes.

\begin{theorem}\label{thm:counting}
Let $f\in R$ be reduced and define an isolated hypersurface singularity $Z=(f=0)$.
For $p\in\Z_{\ge0}$ and $\lambda\in\Q\cap(0,1]$,
\begin{equation}\label{eq:VHI-cutoff}
 \frac{I_p(\lambda Z)+(f,J_f)}{(f,J_f)}=V_{\HI}^{p+\lambda}T_f.
\end{equation}
It follows that
\begin{equation}\label{eq:counting}
 \dim_{\C}\frac{R}{I_p(\lambda Z)+(f,J_f)}
 =\#\{i:\alpha^{\Tj}_{f,i}<p+\lambda\}.
\end{equation}
\end{theorem}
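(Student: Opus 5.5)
The plan is to prove \eqref{eq:VHI-cutoff} as two inclusions inside $T_f$, using the comparison \eqref{eq:MP-comparison} in its reduced form $I_q(\alpha Z)+(f)=\widetilde V_f^{q+\alpha}R+(f)$. Once \eqref{eq:VHI-cutoff} is established, \eqref{eq:counting} reduces to finite-dimensional linear algebra.

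\textbf{Inclusion $\subseteq$.} This is immediate from \eqref{eq:VHI-T}. The pair $(q,\alpha)=(p,\lambda)$ satisfies $\alpha+q\ge p+\lambda$, so $I_p(\lambda Z)$ is one of the summands defining $V_{\HI}^{p+\lambda}T_f$.

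\textbf{Inclusion $\supseteq$.} Fix $(q,\alpha)$ with $q\in\Z_{\ge0}$, $\alpha\in\Q\cap(0,1]$ and $q+\alpha\ge p+\lambda$. Modulo $(f)$, the comparison gives
\[
I_q(\alpha Z)+(f)=\widetilde V_f^{q+\alpha}R+(f)\subseteq\widetilde V_f^{p+\lambda}R+(f)=I_p(\lambda Z)+(f).
\]
The middle inclusion uses only that the microlocal $V$-filtration is decreasing. Adding $J_f$ and passing to $T_f=R/(f,J_f)$ shows that every summand of \eqref{eq:VHI-T} lies in the image of $I_p(\lambda Z)$. Hence equality holds.

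The one point to check here is that the comparison applies to every $q\ge0$. For $q\ge1$ this is \cite[Theorem A]{MP-Q}, with the twist trivial because $f$ is reduced. For $q=0$ it is the multiplier-ideal identity cited before the theorem. I expect this bookkeeping, in particular keeping the non-strict convention $\alpha+q\ge\beta$ consistent with the indexing of $\widetilde V_f$, to be the only delicate step. There is no deeper obstacle, since everything happens modulo $(f)$, which is killed in $T_f$ anyway.

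\textbf{Counting.} By \eqref{eq:VHI-cutoff},
\[
\dim_{\C}\frac{R}{I_p(\lambda Z)+(f,J_f)}=\dim_{\C}T_f/V_{\HI}^{c}T_f,\qquad c=p+\lambda.
\]
The filtration $V_{\HI}$ on the finite-dimensional space $T_f$ is decreasing, exhaustive, separated and left-continuous with finitely many jumps, as fixed in \cref{sec:prelim}. Choosing an adapted basis as in the proof of \cref{lem:conv-gr}, $V_{\HI}^cT_f$ is spanned by the basis vectors of level $\ge c$. Therefore
\[
\dim T_f/V^c_{\HI}T_f=\sum_{\beta<c}\dim\Gr^\beta_{V_{\HI}}T_f.
\]
By \eqref{eq:Tj-mult}, the right-hand side equals $\#\{i:\alpha^{\Tj}_{f,i}<c\}$, which proves \eqref{eq:counting}.
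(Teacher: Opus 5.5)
Your proposal is correct and matches the paper's proof step for step. The inclusion $\subseteq$ comes from the summand $(q,\alpha)=(p,\lambda)$, the inclusion $\supseteq$ from the Musta\c{t}\u{a}--Popa comparison modulo $(f)$ together with the monotonicity of $\widetilde V_f$, and the dimension count expresses $T_f/V_{\HI}^{p+\lambda}T_f$ as the sum of the graded pieces strictly below the cutoff.
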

\begin{proof}
By \eqref{eq:VHI-T},
\[
 V_{\HI}^{p+\lambda}T_f
 =\frac{\sum_{\alpha+q\ge p+\lambda}I_q(\alpha Z)+(f,J_f)}{(f,J_f)}.
\]
For every summand, \eqref{eq:MP-comparison} and the fact that the microlocal
$V$-filtration is decreasing give
\[
 I_q(\alpha Z)+(f)
 =\widetilde V_f^{q+\alpha}R+(f)
 \subseteq\widetilde V_f^{p+\lambda}R+(f)
 =I_p(\lambda Z)+(f).
\]
After adjoining $J_f$, the whole sum is contained in $I_p(\lambda Z)+(f,J_f)$. The
reverse inclusion holds because the pair $(q,\alpha)=(p,\lambda)$ occurs in the sum.
This proves \eqref{eq:VHI-cutoff}.

The third isomorphism theorem gives
\[
 \frac{R}{I_p(\lambda Z)+(f,J_f)}
 \simeq
 \frac{T_f}{V_{\HI}^{p+\lambda}T_f}.
\]
Using \eqref{eq:Tj-mult} and the fact that the filtration is decreasing,
\[
 \dim_{\C}\frac{T_f}{V_{\HI}^{p+\lambda}T_f}
 =\sum_{\beta<p+\lambda}\dim_{\C}\Gr_{V_{\HI}}^{\beta}T_f
 =\#\{i:\alpha^{\Tj}_{f,i}<p+\lambda\}.
\]
So we have $\dim_{\C}\frac{R}{I_p(\lambda Z)+(f,J_f)}=\#\{i:\alpha^{\Tj}_{f,i}<p+\lambda\}.$

\end{proof}

\begin{corollary}\label{cor:mk-count}
For
\(m_k(f):=\dim_{\C}\frac{R}{I_k(Z)+(f,J_f)},\)
one has, for every $k\in\Z_{\ge0}$,
\begin{equation}\label{eq:mk-count}
 m_k(f)=\#\{i:\alpha^{\Tj}_{f,i}<k+1\}.
\end{equation}
\end{corollary}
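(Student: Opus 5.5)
This is the special case $p=k$, $\lambda=1$ of \cref{thm:counting}. Given $k\in\Z_{\ge0}$, we take $p=k$ and $\lambda=1$. Then $\lambda\in\Q\cap(0,1]$ is admissible, and $c=p+\lambda=k+1$ is exactly the unique decomposition of the integer $k+1$ required in \eqref{eq:Hdef}.

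With this choice the divisor is $\lambda Z=1\cdot Z=Z$, so $I_p(\lambda Z)=I_k(Z)$. This agrees with the integral-divisor convention recalled after \eqref{eq:Hodge-ideal-original}, where $\alpha=1$ gives $F_qR[f^{-1}]=I_q(Z)f^{-q-1}$.

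Substituting into \eqref{eq:counting} gives
\[
 m_k(f)=\dim_{\C}\frac{R}{I_k(Z)+(f,J_f)}=\#\{i:\alpha^{\Tj}_{f,i}<k+1\}.
\]
Equivalently, $m_k(f)=H_f(k+1)$, and the identity is \cref{thm:introA} at $c=k+1$.

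The only point to check is the boundary convention. Because $\lambda$ ranges over $(0,1]$ and not $[0,1)$, the integer $k+1$ is written as $k+1$ and not as $(k+1)+0$. So the statement uses $I_k(Z)$ rather than $I_{k+1}(0\cdot Z)$, and the count is strict, $<k+1$. This matches the left-continuous convention $V^{\beta}=\bigcap_{\beta'<\beta}V^{\beta'}$ under which \eqref{eq:counting} was proved. Beyond this, no new input is needed.
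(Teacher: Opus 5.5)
Your proposal is correct and matches the paper's proof exactly: the paper also obtains this corollary by specializing \cref{thm:counting} to $p=k$ and $\lambda=1$. Your remark on why $k+1$ decomposes with $\lambda=1$ rather than $\lambda=0$, which is what makes the count a strict inequality, is a correct reading of the convention.
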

\begin{proof}
This is \cref{thm:counting} with $p=k$ and $\lambda=1$.
\end{proof}

\begin{remark}\label{rem:rho}
Suppose that $f$ is weighted homogeneous of weighted degree $1$ with
positive weights $w_1,\ldots,w_n$ and defines an isolated
hypersurface singularity. Since $A_f=R/J_f$ is finite-dimensional,
we may choose a monomial basis
$[v_1],\ldots,[v_{\mu_f}]$ of $A_f$ compatible with the weighted grading. Write
$v_j=\bx^{\nu_j}=x_1^{\nu_{j,1}}\cdots x_n^{\nu_{j,n}}$,
and set $ \rho(v_j):=\sum_{\ell=1}^n w_\ell(\nu_{j,\ell}+1).$

In JKSY, \S1.5, the quantity corresponding to
$\rho(v_j)$ is denoted by
$ \ell_w(\nu):=\sum_{i=1}^n w_i(\nu_i+1).$
Let $d\bx:=dx_1\wedge\cdots\wedge dx_n$. JKSY prove in \cite[(1.21)]{JKSY} that
\begin{equation}
 \partial_t t[\bx^{\nu}d\bx]
 =\ell_w(\nu)[\bx^{\nu}d\bx],
\end{equation}
where $t$ is the base parameter in the Gauss--Manin system. Assigning weighted degree
$w_i$ to both $x_i$ and $dx_i$, we have
\[
 \deg_w(\bx^{\nu}d\bx)
 =\sum_i w_i\nu_i+\sum_i w_i
 =\ell_w(\nu).
\]
Thus the $+1$ in each term $w_i(\nu_i+1)$ comes from the contribution of the volume
form $d\bx$, rather than from an additional indexing shift.

The $V$-filtration on the Gauss--Manin system is indexed so that
$\partial_t t-\beta$ is nilpotent on $\Gr_V^\beta$; see \cite[\S1.2]{JKSY}.
With this convention, the identity above places the class $[\bx^{\nu}d\bx]$ at
$V$-level $\ell_w(\nu)$. JKSY make this explicit by introducing the weighted filtration
$V_w$ on $\C\{\bx\}$ in \cite[(1.22)]{JKSY} and identifying the $V$-filtration on the
Brieskorn lattice with the quotient filtration induced by $V_w$ in \cite[(1.23)]{JKSY}.
Combining this identification with their Hilbert--Poincar\'e description
\cite[(1.24)]{JKSY} and the weighted-homogeneous spectrum formula
\cite[(1.20)]{JKSY} gives the equality of multisets
\[
 \{\alpha^{\St}_{f,j}\}_{j=1}^{\mu_f}
 =\{\rho(v_j)\}_{j=1}^{\mu_f}.
\]
After relabeling the monomial basis if necessary, we may therefore assume that
\[
\alpha^{\St}_{f,j}=\rho(v_j),
\qquad
1\le j\le\mu_f.
\]

On the other hand, Euler's identity gives $f\in J_f$, and hence $T_f=A_f$. Under this
identification, the quotient filtration on $T_f$ agrees with the filtration on $A_f$.
Therefore the Tjurina subspectrum coincides with the Steenbrink spectrum; compare
\cite[Introduction, (0.5)]{JKSY}. Consequently,
\[
 \{\alpha^{\Tj}_{f,j}\}_{j=1}^{\mu_f}
 =\{\alpha^{\St}_{f,j}\}_{j=1}^{\mu_f}
 =\{\rho(v_j)\}_{j=1}^{\mu_f}.
\]

\cref{cor:mk-count} therefore specializes to
\[
 m_k(f)=\#\{j:\rho(v_j)<k+1\}.
\]
Thus, for a weighted-homogeneous isolated hypersurface singularity, $m_k(f)$ counts the
Milnor-algebra basis monomials whose weighted spectral level is strictly below $k+1$.
\end{remark}
We conclude the section with a simple weighted-homogeneous example illustrating the counting formula. Let
\(f=x^a+y^b\) with \(a,b\ge2\).
Then $f$ is weighted homogeneous of degree $1$ with weights $(w_1,w_2)=(\frac{1}{a},\frac{1}{b})$, and
\(A_f=T_f\simeq\C\{x,y\}/(x^{a-1},y^{b-1}).\)
The monomial classes $x^iy^j$, $0\le i\le a-2$, $0\le j\le b-2$, have spectral
exponents
\(\alpha_{i,j}=\frac{i+1}{a}+\frac{j+1}{b}.\)
\cref{cor:mk-count} gives
\begin{equation}\label{eq:BP-mk}
 m_k(f)=\#\left\{(i,j):0\le i\le a-2,\ 0\le j\le b-2,
 \ \frac{i+1}{a}+\frac{j+1}{b}<k+1\right\}.
\end{equation}
For $k\ge1$, every exponent is less than 2, since
\(\frac{a-1}{a}+\frac{b-1}{b}<2,\)
so
\(m_k(f)=\mu_f=(a-1)(b-1).\)
For $k=0$, however, \eqref{eq:BP-mk} retains the nontrivial part of the spectrum below
$1$. For instance, for $f=x^2+y^5$ the exponents are
\(\frac7{10},\ \frac9{10},\ \frac{11}{10},\ \frac{13}{10},\)
and hence
\(m_0(f)=2\) and \(m_k(f)=4\) for \(k\ge1\).

The endpoint can be tested explicitly on the ordinary node
\(h=x^2+y^2.\)
Here $A_h=T_h\simeq\C$, and the unique spectral exponent in our convention is $1$.
\cref{cor:mk-count} therefore gives
\(m_0(h)=0\) and \(m_k(h)=1\) for \(k\ge1\).
The strict inequality in the counting theorem is therefore essential: replacing $\alpha<1$ by
$\alpha\le1$ would incorrectly give $m_0(h)=1$.

\section{Sebastiani--Thom sums}\label{sec:ST}

Let
\(\bx=(x_1,\ldots,x_n)\) and \(\by=(y_1,\ldots,y_m)\) with $n,m\ge2$,
and let
\(f\in R_f=\C\{\bx\}\) and \(g\in R_g=\C\{\by\}\)
define isolated hypersurface singularities in disjoint sets of variables. Set
\(F(\bx,\by)=f(\bx)+g(\by)\) and \(R_F=\C\{\bx,\by\}\).
For $h=f,g,F$, write
$J_h$ for the Jacobian ideal and set
\[
 A_h=R_h/J_h,\qquad T_h=R_h/(h,J_h),\qquad
 \mu_h=\dim_{\C}A_h,\qquad \tau_h=\dim_{\C}T_h.
\]
We also write $Z_h=(h=0)$. Since the variables are disjoint,
\(J_F=(J_f,J_g),\)
so $F$ has an isolated critical point. This is the classical Sebastiani--Thom construction;
see \cite{ST}.

At the level of convergent power-series rings, one should not replace $R_F$ by the
algebraic tensor product $R_f\otimes_{\C}R_g$ without completion. We only use tensor
products after quotienting by the Jacobian ideals. Since $A_f$ and $A_g$ are
finite-dimensional, the natural multiplication map induces a canonical isomorphism
\((R_f/J_f)\otimes_{\C}(R_g/J_g) \longrightarrow R_F/(J_f,J_g).\)
Indeed, representatives of bases of the two Milnor algebras give a product basis modulo
$(J_f,J_g)$. Passing to the Jacobian quotients gives the canonical identification
\begin{equation}\label{eq:AF-tensor}
 A_F\simeq A_f\otimes_{\C}A_g,
\end{equation}
under which
\begin{equation}\label{eq:Fclass}
 [F]=[f]\otimes1+1\otimes[g].
\end{equation}
Consequently,
\begin{equation}\label{eq:TF-coker}
 T_F\simeq
 \frac{A_f\otimes A_g}{([f]\otimes1+1\otimes[g])(A_f\otimes A_g)}.
\end{equation}

We next describe the numerical Tjurina defect in terms of the Jordan blocks of multiplication by $[f]$ and $[g]$. For $h=f,g$, let
\(N_h:A_h\longrightarrow A_h\)
be multiplication by $[h]$. This operator is nilpotent: $A_h$ is local Artinian and $[h]$
belongs to its nilpotent maximal ideal. Its Jordan blocks therefore have positive sizes
\(\ell_{h,1},\ldots,\ell_{h,\tau_h}.\)
To make the relation with the Tjurina number explicit, consider first a single nilpotent
Jordan block of length $\ell$. Choose a basis $e_1,\ldots,e_\ell$ such that
\(N e_1=0\), \(N e_2=e_1\), \(\ldots\), and \(N e_\ell=e_{\ell-1}\).
Then
\(NA=\langle e_1,\ldots,e_{\ell-1}\rangle,\)
so the cokernel of this block is one-dimensional. Since $T_h=A_h/[h]A_h=\coker N_h$,
each Jordan block contributes exactly one dimension to $T_h$. Hence the number of blocks is
\(\tau_h=\dim_{\C}\coker N_h.\)

For the same block,
\(N^{j-1}A/N^jA\)
is one-dimensional for $j\le\ell$ and zero for $j>\ell$. Therefore, after taking the direct
sum over all blocks, we have
\begin{equation}\label{eq:bj}
 b_j(h):=\dim_{\C}\bigl(N_h^{j-1}A_h/N_h^jA_h\bigr)
 =\#\{i:\ell_{h,i}\ge j\},\qquad j\ge1.
\end{equation}
Thus a block of length $\ell$ contributes $1$ to each of
$b_1(h),\ldots,b_\ell(h)$ and nothing to the later $b_j(h)$. Consequently,
\[
 b_1(h)=\tau_h,
 \qquad
 \sum_{j\ge1}b_j(h)=\sum_i\ell_{h,i}=\dim_{\C}A_h=\mu_h.
\]
Equivalently,
\begin{equation}\label{eq:mu-tau-blocks}
 \mu_h-\tau_h
 =\sum_i(\ell_{h,i}-1)
 =\sum_{j\ge2}b_j(h).
\end{equation}
In this sense, $\mu_h-\tau_h$ measures the total amount by which the Jordan blocks exceed
length one.

\begin{proposition}\label{prop:JordanDefect}
With the notation above for the Jordan blocks of $N_f$ and $N_g$
and with $F=f+g$, we have
\begin{equation}\label{eq:tau-Jordan}
 \tau_F=
 \sum_{i=1}^{\tau_f}\sum_{a=1}^{\tau_g}\min(\ell_{f,i},\ell_{g,a})
 =\sum_{j\ge1}b_j(f)b_j(g),
\end{equation}
and
\begin{equation}\label{eq:eps-Jordan}
 \eps_{f,g}:=\tau_F-\tau_f\tau_g
 =\sum_{j\ge2}b_j(f)b_j(g).
\end{equation}
Therefore
\begin{equation}\label{eq:eps-upper}
 0\le\eps_{f,g}\le(\mu_f-\tau_f)(\mu_g-\tau_g).
\end{equation}
The defect vanishes exactly when $N_f=0$ or $N_g=0$.
\end{proposition}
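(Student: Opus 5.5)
The plan is to compute $\tau_F=\dim_{\C}\coker(N_f\otimes1+1\otimes N_g)$ on $A=A_f\otimes A_g$, using \eqref{eq:TF-coker}, by splitting $A$ into tensor products of Jordan blocks. Decompose $A_f=\bigoplus_i E_i$ and $A_g=\bigoplus_a E'_a$ into $N_f$- and $N_g$-stable Jordan blocks of lengths $\ell_{f,i}$ and $\ell_{g,a}$. The operator $S:=N_f\otimes1+1\otimes N_g$ preserves each summand $E_i\otimes E'_a$. Hence $\coker S=\bigoplus_{i,a}\coker(S|_{E_i\otimes E'_a})$, and the computation reduces to a single pair of nilpotent Jordan blocks $J_p$ and $J_q$ of lengths $p,q$.

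For one pair, identify $E\simeq\C[s]/(s^p)$ and $E'\simeq\C[t]/(t^q)$, with $N_f$ acting as multiplication by $s$ and $N_g$ as multiplication by $t$. Then
\[
E\otimes E'\simeq\C[s,t]/(s^p,t^q),
\]
and $S$ is multiplication by $s+t$. The cokernel is
\[
\C[s,t]/(s^p,t^q,s+t)\simeq\C[s]/(s^p,(-s)^q)=\C[s]/(s^{\min(p,q)}),
\]
which has dimension $\min(p,q)$. Summing over all pairs gives the first equality in \eqref{eq:tau-Jordan}.

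For the second equality, write $\min(p,q)=\sum_{j\ge1}\ind[p\ge j]\,\ind[q\ge j]$. Summing over $i$ and $a$ and using \eqref{eq:bj} gives
\[
\sum_{j\ge1}b_j(f)b_j(g).
\]
Since $b_1(h)=\tau_h$, subtracting the $j=1$ term yields \eqref{eq:eps-Jordan}. Each term is nonnegative, so $\eps_{f,g}\ge0$.

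For the upper bound, use $b_j(g)\le\sum_{k\ge2}b_k(g)=\mu_g-\tau_g$ for $j\ge2$, together with \eqref{eq:mu-tau-blocks}:
\[
\sum_{j\ge2}b_j(f)b_j(g)\le(\mu_g-\tau_g)\sum_{j\ge2}b_j(f)=(\mu_f-\tau_f)(\mu_g-\tau_g).
\]

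Vanishing is handled as follows. Since $b_j$ is nonincreasing in $j$, the sum is zero iff $b_2(f)b_2(g)=0$. Now $b_2(h)=0$ iff every block has length $1$, i.e. iff $N_h=0$.

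The main obstacle is justifying the Jordan-block reduction. The block decomposition of $A_f$ is a decomposition of $\C[N_f]$-modules rather than of algebras. This is harmless, because $\coker S$ depends only on the $\C[s,t]$-module structure of $A_f\otimes A_g$, where $s$ and $t$ act through $N_f$ and $N_g$. The only other step needing care is the cokernel computation for a single pair of blocks, and the explicit quotient above settles it.
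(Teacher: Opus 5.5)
Your proposal is correct and takes the same route as the paper. The paper also reduces to pairs of Jordan blocks, identifies each pair with $\C[s,t]/(s^p,t^q)$ on which $N_F$ acts by $s+t$ to get a cokernel of dimension $\min(p,q)$, and rewrites $\min(p,q)$ as a sum of products of indicators; your variations (bounding each $b_j(g)$ by $\mu_g-\tau_g$ instead of expanding the product and discarding off-diagonal terms, and using that $b_j$ is nonincreasing in $j$ for the vanishing criterion) are only cosmetic.
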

\begin{proof}
Write the Jordan decompositions as
\[
 A_f=\bigoplus_{i=1}^{\tau_f}V_{f,i},
 \qquad
 A_g=\bigoplus_{a=1}^{\tau_g}W_{g,a},
\]
where $V_{f,i}$ and $W_{g,a}$ are blocks of lengths $\ell_{f,i}$ and $\ell_{g,a}$
respectively. Under the identification $A_F\simeq A_f\otimes A_g$, multiplication by
$[F]=[f]\otimes1+1\otimes[g]$ is
\(N_F=N_f\otimes1+1\otimes N_g.\)
Hence $A_F$ is the direct sum of the tensor products $V_{f,i}\otimes W_{g,a}$, and it is
enough to compute the cokernel of $N_F$ on one such pair of blocks. A block of size $a$ for $N_f$ may be written as the vector space
$\C[s]/(s^a)$, on which $N_f$ acts by multiplication by $s$; equivalently, it has a
basis $1,s,\ldots,s^{a-1}$. Likewise a block of size $b$ for $N_g$ is
$\C[t]/(t^b)$. On a pure tensor $p(s)\otimes q(t)$, we have
\[
 (N_f\otimes1)(p\otimes q)=sp\otimes q,
 \qquad
 (1\otimes N_g)(p\otimes q)=p\otimes tq.
\]
Thus, after identifying the tensor product with
\(\C[s,t]/(s^a,t^b),\)
the operator $N_f\otimes1+1\otimes N_g$ is multiplication by $s+t$. Its cokernel is
\[
 \frac{\C[s,t]}{(s^a,t^b,s+t)}
 \simeq
 \frac{\C[s]}{(s^{\min(a,b)})},
\]
and therefore has dimension $\min(a,b)$. Summing over all pairs of blocks gives
\[
 \dim_{\C}\coker N_F
 =\sum_{i=1}^{\tau_f}\sum_{a=1}^{\tau_g}
   \min(\ell_{f,i},\ell_{g,a}).
\]
Since $T_F=\coker N_F$, this is the first equality in \eqref{eq:tau-Jordan}.

For positive integers $a,b$,
\(\min(a,b) =\sum_{j\ge1}\ind_{\{a\ge j\}}\ind_{\{b\ge j\}}.\)
Applying this to every pair of blocks and rearranging the finite sums gives
\begin{align*}
 \tau_F
 &=\sum_{i,a}\sum_{j\ge1}
   \ind_{\{\ell_{f,i}\ge j\}}\ind_{\{\ell_{g,a}\ge j\}}\\
 &=\sum_{j\ge1}
   \left(\sum_i\ind_{\{\ell_{f,i}\ge j\}}\right)
   \left(\sum_a\ind_{\{\ell_{g,a}\ge j\}}\right)\\
 &=\sum_{j\ge1}b_j(f)b_j(g),
\end{align*}
which is the second equality in \eqref{eq:tau-Jordan}. Because
$b_1(f)=\tau_f$ and $b_1(g)=\tau_g$, the $j=1$ term is $\tau_f\tau_g$, and hence
\(\eps_{f,g}=\tau_F-\tau_f\tau_g =\sum_{j\ge2}b_j(f)b_j(g).\)

For the upper bound, \eqref{eq:mu-tau-blocks} gives
\begin{align*}
 (\mu_f-\tau_f)(\mu_g-\tau_g)
 &=\left(\sum_{j\ge2}b_j(f)\right)
   \left(\sum_{k\ge2}b_k(g)\right)\\
 &=\sum_{j,k\ge2}b_j(f)b_k(g)\\
 &\ge\sum_{j\ge2}b_j(f)b_j(g)
 =\eps_{f,g},
\end{align*}
which proves \eqref{eq:eps-upper}. Finally, if both $N_f$ and $N_g$ are nonzero, each
has a block of length at least two, so $b_2(f)b_2(g)>0$ and the defect is positive.
Conversely, if one operator is zero, every block for that operator has length one and all its
$b_j$ with $j\ge2$ vanish. This proves the vanishing criterion.
\end{proof}

The numerical problem in \eqref{eq:tau-Jordan} belongs to the Thom--Sebastiani theory
of Tjurina numbers developed in \cite{Almiron}. The block argument above is included to
make the defect calculation self-contained. No compatibility between Jordan bases and the
$V$-filtration is assumed.

\begin{corollary}\label{cor:squarezero}
One has $\tau_F=\tau_f\tau_g$ if and only if $f\in J_f$ or $g\in J_g$. If $N_f^2=N_g^2=0$, then
\begin{equation}\label{eq:squarezero}
 \tau_F=\tau_f\tau_g+(\mu_f-\tau_f)(\mu_g-\tau_g).
\end{equation}
For two reduced isolated plane curve singularities, the same formula \eqref{eq:squarezero} applies.
\end{corollary}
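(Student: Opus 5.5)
The plan is to derive all three assertions of \cref{cor:squarezero} from \cref{prop:JordanDefect}. The equivalence $\tau_F=\tau_f\tau_g$ if and only if $f\in J_f$ or $g\in J_g$ is a translation of the vanishing criterion there: $\eps_{f,g}=0$ exactly when $N_f=0$ or $N_g=0$. Since $N_h$ is multiplication by $[h]$ on $A_h$, we have $N_h=0$ if and only if $[h]\cdot 1=0$, i.e. $h\in J_h$. Equivalently, by \eqref{eq:mu-tau}, this holds if and only if $\mu_h=\tau_h$.

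For the square-zero formula, I would use \eqref{eq:bj}. If $N_h^2=0$, every Jordan block of $N_h$ has length at most $2$, so $b_j(h)=0$ for $j\ge3$. Then \eqref{eq:mu-tau-blocks} gives $b_2(h)=\mu_h-\tau_h$ for $h=f,g$. Substituting into \eqref{eq:eps-Jordan} yields
\[
\eps_{f,g}=b_2(f)b_2(g)=(\mu_f-\tau_f)(\mu_g-\tau_g),
\]
which is \eqref{eq:squarezero}.

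The real content is the plane-curve claim. There one must show $N_h^2=0$, i.e. $h^2\in J_h$, for every reduced isolated plane curve singularity $h\in\C\{x,y\}$. This is the main obstacle, since it is not formal. I would first invoke the known result that for plane curves $h^2\in J_h$, which is Almir\'on's $e_{\mathrm{BS}}\le2$ statement for curves. It can also be derived from Brian\c{c}on--Skoda type sharpenings in dimension two: Brian\c{c}on--Skoda gives $h^{n}\in J_h$ with $n=2$ variables, i.e. $h^2\in J_h$ directly. That is in fact the cleanest route. The Brian\c{c}on--Skoda theorem states that $h^n\in J_h$ for $h\in\C\{x_1,\ldots,x_n\}$ with isolated singularity (indeed in general $h^{n}\in \overline{J_h}^{(n)}$-type containment, giving $h^n\in J_h$ since $h\in\overline{J_h}$). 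With $n=2$ this gives $N_h^2=0$, and the previous paragraph applies to both factors.

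The only check needed is that the variable-count hypothesis is compatible with the setting: here $n=m=2$, which is allowed since $n,m\ge2$. If one prefers not to cite Brian\c{c}on--Skoda, I would fall back on citing Almir\'on's plane-curve computation as the paper's introduction already indicates.
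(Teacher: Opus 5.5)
Your proposal is correct and follows the paper's proof step for step. It uses the vanishing criterion of \cref{prop:JordanDefect}, then $b_j(h)=0$ for $j\ge3$ so that $b_2(h)=\mu_h-\tau_h$ in \eqref{eq:eps-Jordan}, and finally Brian\c{c}on--Skoda $h^2\in J_h$ for plane curves. Your direct argument $N_h=0\iff[h]=N_h(1)=0\iff h\in J_h$, using that $A_h$ is unital, is cleaner than the paper's appeal to K.~Saito's criterion, which this equivalence does not need.
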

\begin{proof}
The first assertion follows from \cref{prop:JordanDefect} and K.~Saito's
criterion \cite{KSaito}: $N_h=0$ is equivalent to $h\in J_h$, equivalently
$\mu_h=\tau_h$. If $N_h^2=0$, then $b_j(h)=0$ for $j\ge3$, so
\(b_2(h)=\mu_h-\tau_h,\)
and \eqref{eq:squarezero} follows from \eqref{eq:eps-Jordan}. For a germ in $d$
variables, the classical Brian\c{c}on--Skoda containment
\(h^d\in J_h\)
\cite{BS} gives $N_h^d=0$. Taking $d=2$ proves the plane-curve assertion.
\end{proof}

\section{Filtered Thom--Sebastiani and the Tjurina spectrum}\label{sec:filteredST}

The microlocal Thom--Sebastiani theorem of Maxim--Saito--Sch\"urmann gives, after
shrinking the representatives if necessary,
\begin{equation}\label{eq:microTS}
 \widetilde V_F^cR_F
 =\sum_{a+b\ge c}(\widetilde V_f^aR_f)R_F\,(\widetilde V_g^bR_g)R_F;
\end{equation}
see \cite[proof of Theorem 3.2, (3.2.3)]{MSS}. We use the inequality form because the
filtration is decreasing. Formula \eqref{eq:microTS} is a statement on the structure sheaf:
each factor $\widetilde V_f^aR_f$ or $\widetilde V_g^bR_g$ is first extended to $R_F$, and
the resulting ideals are multiplied in $R_F$. This formulation avoids any completion issue
at the analytic local-ring level. The tensor-product interpretation appears only after
applying the finite-dimensional Jacobian quotient.

\begin{proposition}\label{prop:MilnorConv}
Under \eqref{eq:AF-tensor},
\begin{equation}\label{eq:MilnorConv}
 V_F^cA_F=\sum_{a+b\ge c}V_f^aA_f\otimes V_g^bA_g.
\end{equation}
Taking associated graded gives
\begin{equation}\label{eq:MilnorConv-gr}
 \Gr_{V_F}^cA_F\simeq
 \bigoplus_{a+b=c}\Gr_{V_f}^aA_f\otimes\Gr_{V_g}^bA_g.
\end{equation}
\end{proposition}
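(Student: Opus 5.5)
The plan is to push the microlocal Thom--Sebastiani identity \eqref{eq:microTS} through the Jacobian quotient $R_F\twoheadrightarrow A_F=R_F/J_F$, with $J_F=(J_f,J_g)$. Afterwards we identify the image of each summand under the isomorphism \eqref{eq:AF-tensor}. By \eqref{eq:V-A} applied to $F$,
\[
 V_F^cA_F=\frac{\widetilde V_F^cR_F+J_F}{J_F},
\]
so the image of \eqref{eq:microTS} in $A_F$ is the sum over $a+b\ge c$ of the images of the product ideals $(\widetilde V_f^aR_f)R_F\cdot(\widetilde V_g^bR_g)R_F$.

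The key step is to compute the image of one such product ideal. Write $I_a=\widetilde V_f^aR_f$ and $I'_b=\widetilde V_g^bR_g$. The ideal $I_aI'_bR_F$ is generated over $R_F$ by products $\phi(\bx)\psi(\by)$ with $\phi\in I_a$ and $\psi\in I'_b$.

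Under the multiplication isomorphism $A_f\otimes A_g\to A_F$, the class of $\phi\psi$ is $[\phi]\otimes[\psi]$, and $[\phi]\in V_f^aA_f$, $[\psi]\in V_g^bA_g$. An $R_F$-multiple $r\phi\psi$ is handled as follows. Modulo $J_F$ the element $r$ is congruent to a finite sum $\sum_k p_k(\bx)q_k(\by)$ of products of basis representatives. This uses only finite dimensionality of $A_F$ and the product-basis statement, not any tensor product of the analytic rings. Hence
\[
 [r\phi\psi]=\sum_k[p_k\phi]\otimes[q_k\psi].
\]
Since $I_a$ and $I'_b$ are ideals, $[p_k\phi]\in V_f^aA_f$ and $[q_k\psi]\in V_g^bA_g$. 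This gives the image of $I_aI'_bR_F$ inside $V_f^aA_f\otimes V_g^bA_g$. Conversely, every pure tensor $[\phi]\otimes[\psi]$ with $\phi\in I_a$, $\psi\in I'_b$ is the class of $\phi\psi\in I_aI'_bR_F$, so the image is exactly $V_f^aA_f\otimes V_g^bA_g$. Summing over $a+b\ge c$ gives \eqref{eq:MilnorConv}, i.e.\ $V_F^c$ is the convolution filtration \eqref{eq:convolution} on $A_f\otimes A_g$. Then \eqref{eq:MilnorConv-gr} follows immediately from \cref{lem:conv-gr}.

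The main obstacle is justifying the reduction of an arbitrary $r\in R_F$ modulo $J_F$ to a finite sum of products of basis representatives. This is where completion issues could arise. I would handle it using the canonical isomorphism \eqref{eq:AF-tensor} exactly as stated: surjectivity of $A_f\otimes A_g\to A_F$ already supplies such an expression for $[r]$.

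A minor check is that the sum in \eqref{eq:microTS} is effectively finite. Only finitely many jumping indices matter after passing to the finite-dimensional quotient, so the convolution sum is a sum of finitely many distinct subspaces.
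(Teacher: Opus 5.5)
Your proposal is correct and follows essentially the same route as the paper: push \eqref{eq:microTS} through the quotient $R_F\twoheadrightarrow A_F$, identify the image of each product ideal with $V_f^aA_f\otimes V_g^bA_g$ under \eqref{eq:AF-tensor}, and conclude with \cref{lem:conv-gr}. The only difference is presentation. The paper computes $q_F\bigl((\widetilde V_f^aR_f)R_F\bigr)=V_f^aA_f\otimes A_g$ and its analogue for $g$, then multiplies the two images using that $q_F$ is a ring homomorphism. You instead work on generators $r\phi\psi$, which spells out the reduction of $r$ modulo $J_F$ that the paper leaves implicit.
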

\begin{proof}
Let
\(q_F:R_F\twoheadrightarrow A_F=R_F/J_F\)
be the natural quotient map. By definition, the filtration on the Milnor algebra $A_F$ is
the quotient filtration induced from the microlocal $V$-filtration on $R_F$. Hence
\(V_F^cA_F=q_F(\widetilde V_F^cR_F).\)
Using \eqref{eq:microTS}, we obtain
\[
 V_F^cA_F
 =\sum_{a+b\ge c}
 q_F\bigl((\widetilde V_f^aR_f)R_F(\widetilde V_g^bR_g)R_F\bigr).
\]
We identify the image of each $(a,b)$-term. Under the canonical isomorphism
\eqref{eq:AF-tensor},
\(q_F\bigl((\widetilde V_f^aR_f)R_F\bigr) =V_f^aA_f\otimes A_g,\)
and similarly
\(q_F\bigl((\widetilde V_g^bR_g)R_F\bigr) =A_f\otimes V_g^bA_g.\)
Indeed, the first extended ideal is generated by elements coming from
$\widetilde V_f^aR_f$, whose images in $A_f$ are precisely $V_f^aA_f$, and the second
statement is analogous. Since $q_F$ is an algebra homomorphism, the image of the product
of the two extended ideals is the product of their images. Thus
\[
 \begin{aligned}
 &q_F\bigl((\widetilde V_f^aR_f)R_F(\widetilde V_g^bR_g)R_F\bigr)\\
 &\qquad=(V_f^aA_f\otimes A_g)(A_f\otimes V_g^bA_g)
 =V_f^aA_f\otimes V_g^bA_g.
 \end{aligned}
\]
Substituting this into the preceding sum gives \eqref{eq:MilnorConv}. Thus, under
$A_F\simeq A_f\otimes_{\C}A_g$, the filtration $V_F$ is exactly the convolution
filtration of $(A_f,V_f)$ and $(A_g,V_g)$. Applying \cref{lem:conv-gr}, we have
\eqref{eq:MilnorConv-gr}.
\end{proof}

For $h=f,g,F$, the quotient filtration induced by \eqref{eq:V-A} on
$T_h=A_h/[h]A_h$ agrees, at every positive rational level, with the JKSY filtration
$V_{\HI}$ used in \cref{sec:JKSY}. Indeed, write
\(c=p+\lambda\), where \(p\in\Z_{\ge0}\) and \(\lambda\in\Q\cap(0,1]\).
The Musta\c{t}\u{a}--Popa comparison \cite[Theorem A]{MP-Q} shows that both filtrations are
the image in $T_h$ of the same microlocal level. Equivalently, for $c\in\Q_{>0}$,
\begin{equation}\label{eq:common-T-filtration}
 V_{\HI}^cT_h
 =\frac{\widetilde V_h^cR_h+(h,J_h)}{(h,J_h)}.
\end{equation}
From this point on, $V_h$ on $T_h$ denotes this common quotient filtration; when $V_h$ is applied to $A_h$, it continues to mean the quotient filtration defined in \eqref{eq:V-A}. No change of spectral convention is involved.

We now pass from the numerical defect to its filtered refinement. We retain
\(A=A_f\otimes A_g\), \(u=[f]\otimes1\), and \(v=1\otimes[g]\),
and define
\(K_{f,g}:=\frac{uA+vA}{(u+v)A}.\)
It is an ideal of $T_F=A/(u+v)A$. Its filtration is always the induced filtration
\begin{equation}\label{eq:Kfiltration}
 V^cK_{f,g}=K_{f,g}\cap V_F^cT_F.
\end{equation}

\begin{theorem}\label{thm:filtered-defect}
There is a canonical strict exact sequence
\begin{equation}\label{eq:strictK}
 0\longrightarrow K_{f,g}\longrightarrow T_F
 \xrightarrow{\pi}T_f\otimes_{\C}T_g\longrightarrow0,
\end{equation}
where the last term carries the convolution of its two quotient filtrations. Define
\(\defect_{f,g}(t):=\sum_c\dim_{\C}\Gr_V^cK_{f,g}\,t^c,\)
then we have
\begin{equation}\label{eq:filtered-defect}
 \Sp_F^{\Tj}(t)=\Sp_f^{\Tj}(t)\Sp_g^{\Tj}(t)+\defect_{f,g}(t).
\end{equation}
Its coefficients are nonnegative integers, and
\begin{equation}\label{eq:E1}
 \defect_{f,g}(1)=\eps_{f,g}=\sum_{j\ge2}b_j(f)b_j(g).
\end{equation}
The same exact sequence also yields
\begin{equation}\label{eq:spectrum-sandwich}
 \Sp_f^{\Tj}\Sp_g^{\Tj}\preceq\Sp_F^{\Tj}\preceq\Sp_f\Sp_g.
\end{equation}
The polynomial $\defect_{f,g}$ is zero if and only if $f\in J_f$ or $g\in J_g$. If
$a_h=\min_i\alpha^{\St}_{h,i}$, then
\begin{equation}\label{eq:support-bound}
 \Gr_V^cK_{f,g}=0\qquad(c<a_f+a_g+1).
\end{equation}
\end{theorem}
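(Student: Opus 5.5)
The plan is to build everything on the identification $A_F\simeq A_f\otimes A_g$ with the convolution filtration (\cref{prop:MilnorConv}), and to derive all filtrations on $T_F$, $T_f\otimes T_g$ and $K_{f,g}$ as quotients or subspaces of this single filtered space.

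\emph{Exact sequence.} The map $\pi$ is the canonical surjection $A/(u+v)A\twoheadrightarrow A/(uA+vA)$, and its kernel is $K_{f,g}$ by definition.

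\emph{Strictness.} By \eqref{eq:common-T-filtration}, $V_F$ on $T_F$ is the quotient filtration from $A_F=A$. On $T_f\otimes T_g=(A_f/uA_f)\otimes(A_g/vA_g)$, two applications of \cref{lem:tensor-strict} show that the convolution of the two quotient filtrations equals the quotient filtration from $A$. Hence the composite $A\to T_F\to T_f\otimes T_g$ maps $V^cA$ onto $V^c(T_f\otimes T_g)$. Since $V^cT_F$ is the image of $V^cA$, $\pi$ maps $V^cT_F$ onto $V^c(T_f\otimes T_g)$, so $\pi$ is strict. The inclusion of $K_{f,g}$ is strict because of the induced filtration \eqref{eq:Kfiltration}.

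\emph{Filtered defect formula.} \cref{lem:strict-exact} gives, for every $c$,
\[
 \dim\Gr^c T_F=\dim\Gr^c(T_f\otimes T_g)+\dim\Gr^c K_{f,g}.
\]
By \cref{lem:conv-gr} and \eqref{eq:Tj-mult}, the middle term has generating function $\Sp_f^{\Tj}\Sp_g^{\Tj}$. All exponents are positive, and \eqref{eq:common-T-filtration} holds for $c>0$, so the $V_{\HI}$ and $V_F$ gradings agree. This yields \eqref{eq:filtered-defect}. The coefficients of $\defect_{f,g}$ are dimensions, hence nonnegative integers. Setting $t=1$ gives $\dim K_{f,g}=\tau_F-\tau_f\tau_g$, and \cref{prop:JordanDefect} then gives \eqref{eq:E1}.

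\emph{Sandwich and vanishing.} The lower bound in \eqref{eq:spectrum-sandwich} is immediate from \eqref{eq:filtered-defect}. For the upper bound, apply \cref{lem:strict-exact} to the strict surjection $A_F\twoheadrightarrow T_F$, whose kernel $[F]A_F$ carries the induced filtration. This gives $\dim\Gr^cT_F\le\dim\Gr^cA_F$, and \eqref{eq:MilnorConv-gr} with \eqref{eq:St-mult} identifies the latter generating function as $\Sp_f\Sp_g$. A polynomial with nonnegative coefficients vanishes iff its value at $1$ vanishes, so $\defect_{f,g}=0$ iff $\eps_{f,g}=0$. By \cref{cor:squarezero}, this happens iff $f\in J_f$ or $g\in J_g$.

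\emph{Support bound.} This is the step I expect to be the main obstacle. Any element of $K_{f,g}$ is the image of some $ux+vy$. Modulo $(u+v)A$ we have $v\equiv-u$, so every element of $K_{f,g}$ is the image of $u\,z$ for some $z\in A$; thus $K_{f,g}$ is the image of $uA$. The plan is to show $uA\subseteq V^{a_f+a_g+1}A$. By \eqref{eq:MilnorConv} this reduces to two facts:
\begin{enumerate}
\item $[f]A_f\subseteq V_f^{a_f+1}A_f$;
\item $V_g^{a_g}A_g=A_g$.
\end{enumerate}
Fact (2) is clear since $a_g$ is the minimal exponent. For fact (1), I would use that $f\cdot\widetilde V_f^{\alpha}R\subseteq\widetilde V_f^{\alpha+1}R$. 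This corresponds to $t$ raising $V$ by one on the Brieskorn lattice, i.e.\ multiplication by $f$ acting as $t$ on $\Omega_f^n$. Applied with $\alpha=a_f$ and $\widetilde V_f^{a_f}R+J_f=R$, it gives fact (1).

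Granting both facts, $K_{f,g}\subseteq V^{a_f+a_g+1}T_F$. Then $V^cK_{f,g}=K_{f,g}$ for all $c\le a_f+a_g+1$, so $\Gr^c_VK_{f,g}=0$ for $c<a_f+a_g+1$. The delicate point is justifying that $f$ shifts the microlocal filtration by exactly $+1$ in the JKSY convention. I would take this from Saito's construction, where $\widetilde V^{\alpha+1}=\partial_t^{-1}\widetilde V^\alpha$ and $t\partial_t$ preserves levels.
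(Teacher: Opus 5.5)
Your proposal is correct and follows essentially the same route as the paper. It uses the same exact sequence with strictness from two applications of \cref{lem:tensor-strict}, then \cref{lem:strict-exact} and \cref{lem:conv-gr} for the defect formula, and the same support bound from the $+1$ shift of multiplication by $[f]$ together with $A_h=V_h^{a_h}A_h$. The differences are only minor refinements: you note that $K_{f,g}$ is already the image of $uA$ (since $v\equiv-u$ modulo $(u+v)A$), and you derive the shift from $f\,\widetilde V_f^{\alpha}R\subseteq\widetilde V_f^{\alpha+1}R$ at the ring level rather than citing the equivalent Brieskorn-lattice inclusion $hV^a\Omega_h^d\subseteq V^{a+1}\Omega_h^d$ from JKSY.
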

\begin{proof}
Since $(u+v)A\subseteq uA+vA$, the natural quotient map
\(A/(u+v)A\longrightarrow A/(uA+vA)\)
is surjective. The target is canonically
\(A/(uA+vA) \simeq (A_f/[f]A_f)\otimes(A_g/[g]A_g) =T_f\otimes T_g,\)
and the kernel is precisely
\(\frac{uA+vA}{(u+v)A}=K_{f,g}.\)
This gives the underlying exact sequence in \eqref{eq:strictK}. We now verify the
strictness of both maps.

By \cref{prop:MilnorConv}, $A=A_f\otimes A_g$ carries the convolution filtration
\(V^cA=\sum_{a+b\ge c}V_f^aA_f\otimes V_g^bA_g.\)
The maps $A_f\twoheadrightarrow T_f$ and $A_g\twoheadrightarrow T_g$ are quotient maps,
and hence strict for their quotient filtrations. Applying \cref{lem:tensor-strict} twice gives
\begin{equation}\label{eq:image-to-product}
 \im\bigl(V^cA\longrightarrow T_f\otimes T_g\bigr)
 =\sum_{a+b\ge c}V_f^aT_f\otimes V_g^bT_g
 =V^c(T_f\otimes T_g).
\end{equation}
On the other hand, $T_F=A/(u+v)A$ also carries the quotient filtration from $A$, so
\(V_F^cT_F=\im\bigl(V^cA\longrightarrow T_F\bigr).\)
For the composite $A\to T_F\xrightarrow{\pi}T_f\otimes T_g$, equation
\eqref{eq:image-to-product} therefore yields
\[
 \pi(V_F^cT_F)
 =\im\bigl(V^cA\longrightarrow T_f\otimes T_g\bigr)
 =V^c(T_f\otimes T_g).
\]
Thus $\pi$ is strict. The injection $K_{f,g}\hookrightarrow T_F$ is strict because the
filtration on $K_{f,g}$ is, by definition,
\(V^cK_{f,g}=K_{f,g}\cap V_F^cT_F.\)
Hence \eqref{eq:strictK} is a strict exact sequence.

Applying \cref{lem:strict-exact} gives, for every $c$,
\[
 0\longrightarrow\Gr_V^cK_{f,g}
 \longrightarrow\Gr_{V_F}^cT_F
 \longrightarrow\Gr_V^c(T_f\otimes T_g)
 \longrightarrow0.
\]
By \cref{lem:conv-gr},
\[
 \Gr_V^c(T_f\otimes T_g)
 \simeq
 \bigoplus_{a+b=c}\Gr_{V_f}^aT_f\otimes\Gr_{V_g}^bT_g.
\]
Taking dimensions therefore gives
\begin{equation}\label{eq:graded-dimension-defect}
 \dim_{\C}\Gr_{V_F}^cT_F
 =\dim_{\C}\Gr_V^cK_{f,g}
 +\sum_{a+b=c}
   \dim_{\C}\Gr_{V_f}^aT_f\,
   \dim_{\C}\Gr_{V_g}^bT_g.
\end{equation}
Multiplying by $t^c$ and summing over $c$ turns the three terms in
\eqref{eq:graded-dimension-defect} into, respectively,
\[
 \Sp_F^{\Tj}(t),
 \qquad
 \defect_{f,g}(t),
 \qquad
 \Sp_f^{\Tj}(t)\Sp_g^{\Tj}(t),
\]
so \eqref{eq:filtered-defect} follows.

Setting $t=1$ gives
\[
 \defect_{f,g}(1)
 =\sum_c\dim_{\C}\Gr_V^cK_{f,g}
 =\dim_{\C}K_{f,g}.
\]
The ordinary dimensions in \eqref{eq:strictK} also give
\(\dim_{\C}K_{f,g} =\tau_F-\tau_f\tau_g =\eps_{f,g}.\)
Combining this with \cref{prop:JordanDefect} yields
\(\defect_{f,g}(1)=\eps_{f,g} =\sum_{j\ge2}b_j(f)b_j(g),\)
which is \eqref{eq:E1}. In particular,
\(\defect_{f,g}=0 \iff K_{f,g}=0 \iff \eps_{f,g}=0.\)
The last condition is equivalent, by \cref{prop:JordanDefect} and K.~Saito's criterion,
to $f\in J_f$ or $g\in J_g$. The left inequality in
\eqref{eq:spectrum-sandwich} follows from the nonnegative coefficients of
$\defect_{f,g}$. For the right inequality, the strict quotient
$A_F\twoheadrightarrow T_F$ can only decrease the dimension of each associated-graded
piece, while the classical Thom--Sebastiani formula gives $\Sp_F=\Sp_f\Sp_g$.

It remains to prove the support bound. We first make explicit the identification used here. If $h$ is a germ in $d$ variables and $d\bz=dz_1\wedge\cdots\wedge dz_d$, then
\(dh\wedge\Omega^{d-1}=J_h\,d\bz,\)
so multiplication by the volume form induces a canonical vector-space isomorphism
\[
 A_h=R_h/J_h\xrightarrow{\sim}\Omega_h^d,
 \qquad [g]\longmapsto[g\,d\bz].
\]
Under this identification, multiplication by $[h]$ raises the quotient $V$-filtration on $A_h$ by at least one:
\begin{equation}\label{eq:Nh-raises}
 N_h(V_h^aA_h)\subseteq V_h^{a+1}A_h.
\end{equation}
Indeed, this is exactly the inclusion
\(hV^a\Omega_h^d\subseteq V^{a+1}\Omega_h^d\)
of \cite[Remark 1.1(ii), (1.10)]{JKSY}. Moreover, by \eqref{eq:St-mult} and the minimality of
\(a_h=\min_i\alpha^{\St}_{h,i},\)
all nonzero associated-graded pieces of $A_h$ occur in levels $\ge a_h$. Since the filtration is decreasing and exhaustive, this gives
\(A_h=V_h^{a_h}A_h.\)
To see explicitly what this means on the tensor product, take
$x\in V_f^{a_f}A_f$ and $y\in V_g^{a_g}A_g$. Then
\([f]x\in V_f^{a_f+1}A_f,\)
so, for $u=[f]\otimes1$,
\[
 u(x\otimes y)=[f]x\otimes y
 \in V_f^{a_f+1}A_f\otimes V_g^{a_g}A_g
 \subseteq V^{a_f+a_g+1}A.
\]
The same argument gives
\(v(x\otimes y)=x\otimes[g]y \in V^{a_f+a_g+1}A.\)
Since such tensors span $A=A_f\otimes A_g$, we obtain
\(uA\subseteq V^{a_f+a_g+1}A\) and \(vA\subseteq V^{a_f+a_g+1}A\),
and hence
\(uA+vA\subseteq V^{a_f+a_g+1}A.\)
The image of $uA+vA$ in $T_F$ is $K_{f,g}$, so
\(K_{f,g}\subseteq V_F^{a_f+a_g+1}T_F.\)
Using the induced filtration on the kernel,
\(V^{a_f+a_g+1}K_{f,g} =K_{f,g}\cap V_F^{a_f+a_g+1}T_F =K_{f,g}.\)
Therefore no associated-graded piece of $K_{f,g}$ can occur below this level, i.e.
\(\Gr_V^cK_{f,g}=0\) for \(c<a_f+a_g+1\),
which is \eqref{eq:support-bound}.
\end{proof}

\begin{remark}\label{rem:kernel-filtration}
The block lengths determine the total dimension of $K_{f,g}$. They do not specify its
filtration levels and are not used as filtered Jordan data. The proof therefore does not
replace $\Gr_V((u+v)A)$ by the image of $\Gr_V(u+v)$; such a replacement would require
an additional strictness assertion for multiplication by $u+v$.
\end{remark}

\begin{corollary}
\label{cor:ST-Tj-spectrum}
The Tjurina subspectrum satisfies the Sebastiani--Thom property,
namely
\[
\Sp_F^{\Tj}(t)=\Sp_f^{\Tj}(t)\Sp_g^{\Tj}(t),
\]
if and only if
\[
\tau_F=\tau_f\tau_g.
\]
In particular, this gives an affirmative answer to the question raised in
Almir\'on's Remark 3~\cite{Almiron}.
\end{corollary}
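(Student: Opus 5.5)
This corollary follows directly from \cref{thm:filtered-defect} combined with \cref{prop:JordanDefect}. The plan is to compare the two conditions through the defect polynomial $\defect_{f,g}(t)$ and its value at $t=1$.

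\emph{Forward direction.} Assume $\Sp_F^{\Tj}(t)=\Sp_f^{\Tj}(t)\Sp_g^{\Tj}(t)$. By the filtered defect formula \eqref{eq:filtered-defect}, this forces $\defect_{f,g}(t)=0$. Evaluating at $t=1$ and using \eqref{eq:E1} gives $\eps_{f,g}=0$. By the definition in \cref{prop:JordanDefect}, this says $\tau_F=\tau_f\tau_g$. Alternatively, one can evaluate both spectra at $t=1$: since $\Sp_h^{\Tj}(1)=\tau_h$ for $h=f,g,F$, the equality of polynomials immediately gives $\tau_F=\tau_f\tau_g$.

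\emph{Converse.} Assume $\tau_F=\tau_f\tau_g$. Then $\dim_{\C}K_{f,g}=\tau_F-\tau_f\tau_g=0$, by the exactness of \eqref{eq:strictK}. Hence every graded piece $\Gr_V^cK_{f,g}$ vanishes, so $\defect_{f,g}(t)=0$. By \eqref{eq:filtered-defect}, this yields the product formula for the Tjurina subspectra.

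The key point, and the only place where an obstacle could arise, is that the coefficients of $\defect_{f,g}$ are nonnegative. This is what lets the single number $\defect_{f,g}(1)$ detect vanishing of the whole polynomial. Nonnegativity holds because $\defect_{f,g}$ consists of dimensions of graded pieces, which is guaranteed by the strictness established in \cref{thm:filtered-defect}.

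As a consequence, both conditions are also equivalent to $f\in J_f$ or $g\in J_g$, by \cref{cor:squarezero}. This answers Almir\'on's question.
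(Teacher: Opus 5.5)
Your proposal is correct and takes essentially the same route as the paper. Both directions go through the filtered defect formula of \cref{thm:filtered-defect}, using that $\defect_{f,g}(t)$ has nonnegative coefficients (dimensions of graded pieces of $K_{f,g}$) and that $\defect_{f,g}(1)=\dim_{\C}K_{f,g}=\tau_F-\tau_f\tau_g$; your remark that the forward direction also follows by evaluating both spectra at $t=1$ is a harmless shortcut the paper does not spell out.
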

\begin{proof}
By \cref{thm:filtered-defect}, the difference between
\(\Sp_F^{\Tj}(t)\) and \(\Sp_f^{\Tj}(t)\Sp_g^{\Tj}(t)\) is the polynomial
\(\defect_{f,g}(t)\), whose coefficients are nonnegative and whose value at
one is \(\tau_F-\tau_f\tau_g\). Hence the product formula holds if and only if
\(\defect_{f,g}(t)=0\), which is equivalent to
\(\defect_{f,g}(1)=\tau_F-\tau_f\tau_g=0\).
\end{proof}

\begin{corollary}\label{cor:cutoff}
For $c\in\Q_{>0}$, let $H_h(c)$ be as in the introduction, and set
\(C_{f,g}(c):=\#\{(i,j):\alpha^{\Tj}_{f,i}+\alpha^{\Tj}_{g,j}<c\}.\)
Then
\begin{equation}\label{eq:cutoff}
 H_F(c)=C_{f,g}(c)+E_{f,g}(c),
 \qquad
 E_{f,g}(c):=\dim_{\C}K_{f,g}/V^cK_{f,g}.
\end{equation}
The error $E_{f,g}(c)$ is nondecreasing in $c$, takes integer values, and satisfies
\(0\le E_{f,g}(c)\le\eps_{f,g}.\)
It is zero for $c\le a_f+a_g+1$ and equals $\eps_{f,g}$ for all sufficiently large $c$.
\end{corollary}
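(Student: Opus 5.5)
The plan is to read off the cutoff formula from the strict exact sequence \eqref{eq:strictK} of \cref{thm:filtered-defect}, combined with the counting theorem. First, by \cref{thm:counting} applied to $F$ (using \eqref{eq:common-T-filtration} to identify $V_{\HI}$ with the quotient microlocal filtration $V_F$ on $T_F$), we have
\[
H_F(c)=\dim_{\C}T_F/V_F^cT_F=\sum_{\beta<c}\dim_{\C}\Gr_{V_F}^{\beta}T_F .
\]
This holds because the filtration is finite, decreasing, and left-continuous, so the dimension of the quotient is the sum of the graded pieces strictly below $c$.

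Next, since both maps in \eqref{eq:strictK} are strict, the induced sequence
\[
0\to K_{f,g}/V^cK_{f,g}\to T_F/V_F^cT_F\to (T_f\otimes T_g)/V^c(T_f\otimes T_g)\to 0
\]
is exact. Injectivity on the left is exactly the definition $V^cK_{f,g}=K_{f,g}\cap V_F^cT_F$. Surjectivity on the right is clear. Exactness in the middle follows from strictness of $\pi$, by the same lifting argument as in \cref{lem:strict-exact}. Alternatively, one can sum \eqref{eq:graded-dimension-defect} over $\beta<c$ directly.

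It then remains to compute the right-hand term. Its dimension is $\sum_{\beta<c}\dim\Gr^\beta_V(T_f\otimes T_g)$. By \cref{lem:conv-gr} and \eqref{eq:Tj-mult}, this equals the number of pairs $(i,j)$ with $\alpha^{\Tj}_{f,i}+\alpha^{\Tj}_{g,j}<c$, which is $C_{f,g}(c)$. This gives \eqref{eq:cutoff}.

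The properties of $E_{f,g}(c)$ are then elementary. It is a dimension, so it is an integer with $0\le E_{f,g}(c)\le\dim K_{f,g}=\eps_{f,g}$ by \eqref{eq:E1}. It is nondecreasing because $V^cK_{f,g}$ decreases as $c$ grows. It equals $\eps_{f,g}$ for $c\gg0$, since $V^cK_{f,g}=0$ there by the finiteness convention. For the vanishing, the proof of \eqref{eq:support-bound} showed $V^{a_f+a_g+1}K_{f,g}=K_{f,g}$, so $V^cK_{f,g}=K_{f,g}$ for all $c\le a_f+a_g+1$, and hence $E_{f,g}(c)=0$.

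The only step needing care is the strict-cutoff bookkeeping. We must check that $\dim M/V^cM=\sum_{\beta<c}\dim\Gr^\beta M$ under the left-continuous convention, so that the strict inequality in $C_{f,g}$ matches. This holds because jumps are finite and $V^c=\bigcap_{\beta<c}V^\beta$. I expect this to be the main point requiring attention, though it is routine.
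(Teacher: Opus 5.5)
Your proposal is correct and is essentially the paper's argument. The paper truncates the filtered defect formula \eqref{eq:filtered-defect} at exponents below $c$ and combines it with \cref{thm:counting}; this is equivalent to your truncated exact sequence $K_{f,g}/V^cK_{f,g}\hookrightarrow T_F/V_F^cT_F\twoheadrightarrow (T_f\otimes T_g)/V^c(T_f\otimes T_g)$, and it is exactly the alternative you mention, while your treatment of the properties of $E_{f,g}$ matches the paper's.
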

\begin{proof}
Write
\[
 \defect_{f,g}(t)=\sum_{\gamma}e_\gamma t^\gamma,
 \qquad
 e_\gamma=\dim_{\C}\Gr_V^\gamma K_{f,g}.
\]
By \cref{thm:filtered-defect},
\[
 \Sp_F^{\Tj}(t)
 =\Sp_f^{\Tj}(t)\Sp_g^{\Tj}(t)
  +\sum_\gamma e_\gamma t^\gamma.
\]
The coefficient multiset of the product
$\Sp_f^{\Tj}(t)\Sp_g^{\Tj}(t)$ consists of all pairwise sums
$\alpha^{\Tj}_{f,i}+\alpha^{\Tj}_{g,j}$. Hence, after summing only the exponents
strictly below $c$, \cref{thm:counting} gives
\(H_F(c) =C_{f,g}(c)+\sum_{\gamma<c}e_\gamma.\)
Because the filtration on $K_{f,g}$ is decreasing, finite, exhaustive and separated,
its quotient by $V^cK_{f,g}$ is the sum of the graded pieces lying strictly below the
cutoff. Therefore
\[
 \sum_{\gamma<c}e_\gamma
 =\dim_{\C}\bigl(K_{f,g}/V^cK_{f,g}\bigr)
 =E_{f,g}(c),
\]
which proves \eqref{eq:cutoff}.

As $c$ increases, $V^cK_{f,g}$ decreases, so $E_{f,g}(c)$ is nondecreasing and integer
valued. Its minimum is $0$ and its maximum is $\dim_{\C}K_{f,g}=\eps_{f,g}$.
The support bound \eqref{eq:support-bound} gives $E_{f,g}(c)=0$ for
$c\le a_f+a_g+1$; equality at the endpoint is included because exponents exactly at the
cutoff are not counted. Finally, separatedness of the filtration gives $V^cK_{f,g}=0$ for
all sufficiently large $c$, and then $E_{f,g}(c)=\eps_{f,g}$.
\end{proof}

\begin{corollary}\label{cor:weightedTS}
Assume that $f$ is weighted homogeneous. Then
\(T_F\simeq A_f\otimes_{\C}T_g,\)
and under this isomorphism
\begin{equation}\label{eq:TjConvWeighted}
 V_F^cT_F=\sum_{a+b\ge c}V_f^aA_f\otimes V_g^bT_g.
\end{equation}
We have
\begin{equation}\label{eq:TjConvWeighted-gr}
 \Gr_{V_F}^cT_F\simeq
 \bigoplus_{a+b=c}\Gr_{V_f}^aA_f\otimes\Gr_{V_g}^bT_g,
\end{equation}
and
\begin{equation}\label{eq:weighted-spectrum-product}
 \Sp_F^{\Tj}(t)=\Sp_f(t)\Sp_g^{\Tj}(t)
 =\Sp_f^{\Tj}(t)\Sp_g^{\Tj}(t).
\end{equation}
The analogous statement holds with $f$ and $g$ interchanged.
\end{corollary}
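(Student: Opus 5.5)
Since $f$ is weighted homogeneous, Euler's identity gives $f\in J_f$. Hence $[f]=0$ in $A_f$, so $N_f=0$ and $T_f=A_f$. In the notation of \cref{thm:filtered-defect} we then have $u=[f]\otimes1=0$. Consequently $uA+vA=vA=(u+v)A$ and $K_{f,g}=0$. The strict exact sequence \eqref{eq:strictK} collapses to a strict isomorphism
\[
 \pi:T_F\xrightarrow{\sim}T_f\otimes T_g=A_f\otimes T_g .
\]
This is the underlying isomorphism $T_F\simeq A_f\otimes T_g$. It can also be seen directly from \eqref{eq:TF-coker}: there $A/(1\otimes[g])A\simeq A_f\otimes A_g/[g]A_g$.

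For the filtered statement \eqref{eq:TjConvWeighted}, I use strictness of $\pi$, which was established in the proof of \cref{thm:filtered-defect}. It gives $\pi(V_F^cT_F)=V^c(T_f\otimes T_g)=\sum_{a+b\ge c}V_f^aT_f\otimes V_g^bT_g$. It remains to identify $V_f^aT_f$ with $V_f^aA_f$. The quotient map $A_f\to T_f=A_f/[f]A_f$ is the identity when $[f]=0$. The quotient filtration on $T_f$ is therefore literally the filtration $V_f$ on $A_f$. The common-filtration identity \eqref{eq:common-T-filtration} shows this also agrees with the JKSY filtration at positive levels. Since all spectral numbers are positive, there are no nonpositive-level issues. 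Substituting gives \eqref{eq:TjConvWeighted}. Alternatively, one can argue from \cref{lem:tensor-strict} applied to $\mathrm{id}_{A_f}\otimes(A_g\twoheadrightarrow T_g)$ composed with the filtration of \cref{prop:MilnorConv}; this yields the same formula without referring to $K_{f,g}$.

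The graded statement \eqref{eq:TjConvWeighted-gr} then follows by applying \cref{lem:conv-gr} to the convolution filtration on $A_f\otimes T_g$. Taking Poincaré polynomials and using \eqref{eq:St-mult} and \eqref{eq:Tj-mult} gives $\Sp_F^{\Tj}=\Sp_f\Sp_g^{\Tj}$. The second equality in \eqref{eq:weighted-spectrum-product} is the coincidence $\Sp_f=\Sp_f^{\Tj}$ for weighted homogeneous $f$, noted after \cref{ex:cusp} and in \cref{rem:rho}. It is also consistent with $\defect_{f,g}=0$ from \cref{thm:filtered-defect}. The symmetric statement follows by interchanging the roles of $f$ and $g$ throughout.

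The only point requiring care is the filtration identification. One must check that the quotient filtration on $T_f$ really coincides with $V_f$ on $A_f$ in the convention used for $\Sp_f^{\Tj}$. This is handled by \eqref{eq:common-T-filtration} together with $[f]A_f=0$, and everything else is formal.
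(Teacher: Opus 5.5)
Your proof is correct and follows essentially the same route as the paper: $f\in J_f$ forces $u=0$, hence $K_{f,g}=0$, so the strict sequence \eqref{eq:strictK} becomes a strict isomorphism $T_F\simeq A_f\otimes T_g$ with the convolution filtration, and \cref{lem:conv-gr} together with $\Sp_f^{\Tj}=\Sp_f$ then finishes the argument. Your alternative via \cref{lem:tensor-strict} and \cref{prop:MilnorConv} is also valid; it is the same computation that establishes the strictness of $\pi$ in the proof of \cref{thm:filtered-defect}.
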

\begin{proof}
Since $f$ is weighted homogeneous, $f\in J_f$. Hence
$[f]=0$ in $A_f$, so $T_f=A_f$ with the same quotient filtration. In the notation of
\cref{thm:filtered-defect}, this gives $u=[f]\otimes1=0$, and therefore
\(K_{f,g}=\frac{uA+vA}{(u+v)A}=0.\)
The strict exact sequence \eqref{eq:strictK} consequently reduces to a strict isomorphism
\(T_F\xrightarrow{\sim}T_f\otimes_{\C}T_g =A_f\otimes_{\C}T_g,\)
where the right-hand side carries the convolution filtration. Thus
\(V_F^cT_F =\sum_{a+b\ge c}V_f^aA_f\otimes V_g^bT_g,\)
which is \eqref{eq:TjConvWeighted}. Applying \cref{lem:conv-gr} gives
\eqref{eq:TjConvWeighted-gr}; taking Hilbert--Poincar\'e series yields
\(\Sp_F^{\Tj}(t)=\Sp_f^{\Tj}(t)\Sp_g^{\Tj}(t).\)
Finally, weighted homogeneity gives $\Sp_f^{\Tj}=\Sp_f$, proving
\eqref{eq:weighted-spectrum-product}. The argument with $f$ and $g$ interchanged is identical.
\end{proof}

\begin{corollary}\label{cor:weighted-mk}
Under the hypotheses of \cref{cor:weightedTS}, for every $k\in\Z_{\ge0}$,
\begin{equation}\label{eq:weighted-mk}
 m_k(F)=\#\{(i,j):\alpha^{\St}_{f,i}+\alpha^{\Tj}_{g,j}<k+1\}.
\end{equation}
Equivalently, since $f$ is weighted homogeneous,
\(m_k(F)=\#\{(i,j):\alpha^{\Tj}_{f,i}+\alpha^{\Tj}_{g,j}<k+1\}.\)
\end{corollary}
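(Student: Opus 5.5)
The plan is to combine \cref{cor:mk-count} for $F$ with the spectral product formula of \cref{cor:weightedTS}. By \cref{cor:mk-count} applied to $h=F$, we have $m_k(F)=\#\{\ell:\alpha^{\Tj}_{F,\ell}<k+1\}$. The right-hand side is the sum of the coefficients of $\Sp_F^{\Tj}(t)$ over exponents strictly below $k+1$. Equivalently, it is $\dim_{\C}T_F/V_F^{k+1}T_F$, using \eqref{eq:Tj-mult} and the common quotient filtration \eqref{eq:common-T-filtration}.

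Next we substitute \eqref{eq:weighted-spectrum-product}, which gives $\Sp_F^{\Tj}(t)=\Sp_f(t)\Sp_g^{\Tj}(t)$. Expanding the product, the exponent multiset of $\Sp_f(t)\Sp_g^{\Tj}(t)$ is exactly the multiset of pairwise sums $\alpha^{\St}_{f,i}+\alpha^{\Tj}_{g,j}$, with $1\le i\le\mu_f$ and $1\le j\le\tau_g$, counted with multiplicity. This is the same Cauchy-product bookkeeping as in \cref{lem:conv-gr}, or directly from the graded isomorphism \eqref{eq:TjConvWeighted-gr}. Counting the exponents strictly less than $k+1$ then yields \eqref{eq:weighted-mk}.

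For the equivalent form, we use that $f$ is weighted homogeneous. Then $f\in J_f$, so $T_f=A_f$ with the same filtration, and by \cref{rem:rho} the two multisets agree: $\{\alpha^{\Tj}_{f,i}\}=\{\alpha^{\St}_{f,i}\}$ with $\tau_f=\mu_f$. After relabeling, the two counting sets have equal cardinality. As a consistency check, this is also \cref{cor:cutoff} at $c=k+1$, where $E_{f,g}\equiv0$ because $K_{f,g}=0$.

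No step is a genuine obstacle. The only points to watch are the strict inequality, which matches the left-continuous convention already built into \cref{thm:counting}, and the fact that multiset equality suffices because only cardinalities are being compared.
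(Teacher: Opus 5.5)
Your proposal is correct and follows the paper's own argument: read off the Tjurina spectral multiset of $F$ as all sums $\alpha^{\St}_{f,i}+\alpha^{\Tj}_{g,j}$ from \cref{cor:weightedTS}, then apply \cref{cor:mk-count} to $F$. The equivalent form rests on the coincidence of the Tjurina and Steenbrink spectra of the weighted-homogeneous factor, which the paper likewise uses.
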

\begin{proof}
By \cref{cor:weightedTS}, the Tjurina spectral multiset of $F$ is the multiset of
all sums
\(\alpha^{\St}_{f,i}+\alpha^{\Tj}_{g,j}.\)
Apply \cref{cor:mk-count} to $F$.
\end{proof}

\begin{remark}\label{rem:fractional-info}
\cref{cor:weighted-mk} shows precisely which information is needed for an exact
Thom--Sebastiani computation: one needs the locations of the Tjurina spectral exponents
inside the unit intervals, not merely their cumulative counts at integer cutoffs.
\cref{sec:bounds} quantifies the information that survives after those fractional
positions are forgotten.
\end{remark}

\begin{example}\label{ex:nonzero-defect}
Let
\[
 h=x^5+y^5+x^3y^3,
 \qquad
 P(t)=\bigl(t^{1/5}+t^{2/5}+t^{3/5}+t^{4/5}\bigr)^2.
\]
Then
\begin{equation}\label{eq:example-spectra}
 \Sp_h=P,
 \qquad
 \Sp_h^{\Tj}=P-t^{8/5},
 \qquad
 \Sp_{h\boxplusop h}^{\Tj}=(P-t^{8/5})^2+t^2.
\end{equation}
We obtain $\mu_h=16$, $\tau_h=15$, and
\(\tau_{h\boxplusop h}=226>225=\tau_h^2.\)

The germ is semi-weighted-homogeneous with
principal part $x^5+y^5$ and weights $(1/5,1/5)$. By \cite[\S1.6]{JKSY}, its spectrum is
$P$ and the quotient $V$-filtration is induced by the weighted filtration. In $A_h$, the
derivative relations are
\(x^4=-\frac35x^2y^3\) and \(y^4=-\frac35x^3y^2\).
Every monomial of total degree seven is divisible by $x^4$ or $y^4$. Replacing one
factor $x^4$ or $y^4$ by the relations above raises the total degree by one, so
\(\mathfrak m^7\subseteq\mathfrak m^8.\)
The reverse inclusion is automatic, hence $\mathfrak m^7=\mathfrak m^8=\mathfrak m\mathfrak m^7$. So by Nakayama's lemma, we have
$\mathfrak m^7=0$.

Reduction by the two derivative relations now shows that the sixteen classes
\(x^iy^j\), with \(0\le i,j\le3\),
span $A_h$. The principal weighted-homogeneous part $x^5+y^5$ has Milnor number
$(5-1)^2=16$, and semi-weighted homogeneity preserves the Milnor number, so
$\mu_h=16$. The sixteen spanning classes are therefore a basis. They are adapted to $V$,
with levels
\(\frac{i+j+2}{5},\)
as follows from the weighted filtration and its Hilbert series $P$.

For the class of the equation, write
\(h=h_5+h_6\), where \(h_5=x^5+y^5\) and \(h_6=x^3y^3\).
Euler's identities for the two homogeneous pieces are
\(x(h_5)_x+y(h_5)_y=5h_5\) and \(x(h_6)_x+y(h_6)_y=6h_6\).
Since $h_x=h_y=0$ in the Milnor algebra, the sum of these identities gives
\(5[h_5]+6[h_6]=0.\)
As $[h]=[h_5]+[h_6]$, it follows that
\(5[h]=-[h_6]=-[x^3y^3].\)
Set
\(s=[x^3y^3]\ne0.\)
The class $s$ is one of the basis monomials above, and $s\mathfrak m=0$ because every
product of $s$ with $x$ or $y$ has total degree seven and $\mathfrak m^7=0$. Therefore
\[
 [h]=-\frac15s,
 \qquad
 N_h(1)=-\frac15s,
 \qquad
 N_h(\mathfrak m)=0.
\]
Thus the Jordan chain generated by $1$ is a single block of length two,
\(1\longmapsto -\frac15s\longmapsto0,\)
and the remaining fourteen basis directions give blocks of length one. Hence
\[
 \tau_h=15,
 \qquad
 b_1(h)=15,
 \qquad
 b_2(h)=1,
 \qquad
 b_j(h)=0\quad(j\ge3).
\]
The image $[h]A_h=\C s$ has level
\(\frac{3+3+2}{5}=\frac85.\)
Since $T_h=A_h/[h]A_h$, the quotient removes precisely this one graded class, and hence
\(\Sp_h^{\Tj}(t)=P(t)-t^{8/5}.\)
Moreover,
\(\eps_{h,h}=\sum_{j\ge2}b_j(h)^2=1,\)
so $\dim_{\C}K_{h,h}=1$ and
\(\tau_{h\boxplusop h}=\tau_h^2+1=225+1=226.\)

The unique nonzero action of $N_h$ sends a class of level $2/5$ to one of level $8/5$,
so, in the adapted tensor basis, $N_h\otimes1+1\otimes N_h$ is homogeneous of degree
\(\frac85-\frac25=\frac65.\)
Now
\[
\begin{aligned}
 K_{h,h}&=\frac{uA+vA}{(u+v)A},
 &\qquad A&=A_h\otimes A_h,\\
 u&=[h]\otimes1=-\frac15s\otimes1,
 &\qquad v&=1\otimes[h]=-\frac15 1\otimes s.
\end{aligned}
\]
Because $\dim_{\C}K_{h,h}=1$, it is enough to exhibit one nonzero class. In the quotient,
$(u+v)(1\otimes1)=0$ gives
\(s\otimes1+1\otimes s=0,\)
so $[s\otimes1]=-[1\otimes s]$ and either class can represent the unique generator.
Its level is
\(\frac85+\frac25=2.\)
To see that it is nonzero, note that the level-$2$ part of the image of
$N_h\otimes1+1\otimes N_h$ is spanned by
\(s\otimes1+1\otimes s,\)
whereas every other nonzero image has larger level. Thus the quotient retains a
one-dimensional class at level $2$. Therefore
\(\Gr_V^2K_{h,h}\simeq\C\) and \(\Gr_V^cK_{h,h}=0\) for \(c\ne2\),
and hence
\(\defect_{h,h}(t)=t^2.\)
This extra exponent is not counted by $H_{h\boxplusop h}(2)$; it enters the count
at cutoffs strictly greater than $2$.
\end{example}

\section{Bounds, reconstruction, and symmetry}\label{sec:bounds}

The cutoff formula is used below at successively finer resolutions, starting with integer bins and ending with rational grids. This separates the loss of fractional position inside a bin from the genuine filtered contribution of $K_{f,g}$.

\subsection{Integer-bin bounds and the boundary term}
Put $m_{-1}(h)=0$ and, for $r\in\Z_{\ge0}$, define
\begin{equation}\label{eq:Delta}
 \Delta_r(h):=m_r(h)-m_{r-1}(h)
 =\#\{i:r\le\alpha^{\Tj}_{h,i}<r+1\}.
\end{equation}
For $k\ge0$, set
\begin{equation}\label{eq:LU}
 L_k(f,g):=\sum_{r+s\le k-1}\Delta_r(f)\Delta_s(g),
 \qquad
 U_k(f,g):=\sum_{r+s\le k}\Delta_r(f)\Delta_s(g),
\end{equation}
with empty sums equal to zero.

\begin{theorem}
\label{thm:integer-general}
For arbitrary $f,g,F$ as in \cref{sec:ST}, and for $c\in\Q_{>0}$, set
\[
C_{f,g}(c)
:=
\#\bigl\{(i,j):
\alpha^{\Tj}_{f,i}+\alpha^{\Tj}_{g,j}<c
\bigr\}.
\]
Then, for every $k\ge0$,
\begin{equation}\label{eq:integer-general}
L_k(f,g)\le m_k(F)\le U_k(f,g)+\eps_{f,g}.
\end{equation}
More precisely, define
\begin{equation}\label{eq:Bpair}
B_k^{\mathrm{pair}}(f,g):=
\#\Bigl\{(i,j):
\lfloor\alpha^{\Tj}_{f,i}\rfloor
+\lfloor\alpha^{\Tj}_{g,j}\rfloor=k,\ 
\alpha^{\Tj}_{f,i}+\alpha^{\Tj}_{g,j}<k+1
\Bigr\}.
\end{equation}
Then
\begin{equation}\label{eq:integer-exact}
m_k(F)
=
L_k(f,g)
+
B_k^{\mathrm{pair}}(f,g)
+
E_{f,g}(k+1),
\end{equation}
where
\begin{equation}\label{eq:Bpair-bound}
0\le B_k^{\mathrm{pair}}(f,g)
\le
\sum_{r+s=k}\Delta_r(f)\Delta_s(g).
\end{equation}
For a pair in a boundary block $r+s=k$, write
\[
\alpha^{\Tj}_{f,i}=r+\theta,
\qquad
\alpha^{\Tj}_{g,j}=s+\eta,
\qquad
0\le\theta,\eta<1.
\]
It contributes to $B_k^{\mathrm{pair}}(f,g)$ if and only if
\begin{equation}\label{eq:thetaeta}
\theta+\eta<1.
\end{equation}
If $k+1\le a_f+a_g+1$, then $E_{f,g}(k+1)=0$. For two reduced
isolated plane curve factors,
\(\eps_{f,g}=(\mu_f-\tau_f)(\mu_g-\tau_g).\)
\end{theorem}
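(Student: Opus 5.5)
The plan is to start from the exact cutoff formula of \cref{cor:cutoff} at $c=k+1$. Combined with \cref{cor:mk-count}, it gives
\[
 m_k(F)=H_F(k+1)=C_{f,g}(k+1)+E_{f,g}(k+1).
\]
Everything then reduces to a combinatorial decomposition of $C_{f,g}(k+1)$, the number of pairs $(i,j)$ with $\alpha^{\Tj}_{f,i}+\alpha^{\Tj}_{g,j}<k+1$. For each such pair I write $r=\lfloor\alpha^{\Tj}_{f,i}\rfloor$ and $s=\lfloor\alpha^{\Tj}_{g,j}\rfloor$, with fractional parts $\theta,\eta\in[0,1)$; since all exponents are positive, $r,s\ge0$. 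The pairs are then split according to $r+s$.

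The split has three cases.
\begin{itemize}
\item If $r+s\le k-1$, then $\alpha^{\Tj}_{f,i}+\alpha^{\Tj}_{g,j}<r+s+2\le k+1$, so every such pair is counted.
\item If $r+s\ge k+1$, then the sum is at least $k+1$, so no such pair is counted.
\item If $r+s=k$, then the sum equals $k+\theta+\eta$, which is less than $k+1$ exactly when $\theta+\eta<1$. This gives \eqref{eq:thetaeta}.
\end{itemize}
By \eqref{eq:Delta}, the number of pairs with given floors $(r,s)$ is $\Delta_r(f)\Delta_s(g)$. The first case therefore contributes exactly $L_k(f,g)$, and the boundary case contributes exactly $B_k^{\mathrm{pair}}(f,g)$. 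This yields \eqref{eq:integer-exact}.

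The bound \eqref{eq:Bpair-bound} holds because $B_k^{\mathrm{pair}}$ counts a subset of the pairs in the block $r+s=k$, and that block has size $\sum_{r+s=k}\Delta_r(f)\Delta_s(g)$. The two-sided estimate \eqref{eq:integer-general} then follows from \eqref{eq:integer-exact} together with $0\le B_k^{\mathrm{pair}}\le U_k-L_k$ and $0\le E_{f,g}(k+1)\le\eps_{f,g}$, both taken from \cref{cor:cutoff}.

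For the remaining claims, the vanishing $E_{f,g}(k+1)=0$ when $k+1\le a_f+a_g+1$ is exactly the support statement of \cref{cor:cutoff}, which rests on \eqref{eq:support-bound}. The plane curve formula for $\eps_{f,g}$ follows by combining \cref{cor:squarezero} with \eqref{eq:eps-Jordan}: Brian\c{c}on--Skoda gives $N_h^2=0$, hence $b_j=0$ for $j\ge3$, so $b_2(h)=\mu_h-\tau_h$.

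None of these steps is deep. The only point needing care is the boundary bookkeeping: I must check that every pair with $r+s=k$ and $\theta+\eta<1$ is counted once, and that the strict-inequality convention in \cref{cor:mk-count} matches the definition of $B_k^{\mathrm{pair}}$ (for example, that $\theta+\eta=1$ is correctly excluded).
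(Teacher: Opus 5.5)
Your proposal is correct and follows the paper's proof essentially step for step. Both start from the cutoff formula of \cref{cor:cutoff} at $c=k+1$ and partition the pairs into unit blocks $(r,s)$ of size $\Delta_r(f)\Delta_s(g)$, with the three cases $r+s\le k-1$, $r+s=k$ and $r+s\ge k+1$; both then use \cref{cor:cutoff} and \cref{cor:squarezero} for the remaining claims, and the boundary bookkeeping you flag is harmless because each pair has a unique pair of floors and the strict condition $\theta+\eta<1$ matches the strict cutoff of \cref{cor:mk-count}.
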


\begin{proof}
Partition the pairs
$\bigl(\alpha^{\Tj}_{f,i},\alpha^{\Tj}_{g,j}\bigr)$
into the unit blocks
\[
\mathcal B_{r,s}
:=
\bigl\{(i,j):
r\le\alpha^{\Tj}_{f,i}<r+1,\ 
s\le\alpha^{\Tj}_{g,j}<s+1
\bigr\}.
\]
By the definition of $\Delta_r$, $
\#\mathcal B_{r,s}=\Delta_r(f)\Delta_s(g).$

If $r+s\le k-1$, then for every $(i,j)\in\mathcal B_{r,s}$ we have
\[
\alpha^{\Tj}_{f,i}+\alpha^{\Tj}_{g,j}
<
(r+1)+(s+1)
=
r+s+2
\le k+1.
\]
Thus every pair in such a block contributes to $C_{f,g}(k+1)$.
The total contribution of these blocks is therefore $L_k(f,g)=\sum_{r+s\le k-1}\Delta_r(f)\Delta_s(g).$

If $r+s\ge k+1$, then every $(i,j)\in\mathcal B_{r,s}$ satisfies
$\alpha^{\Tj}_{f,i}+\alpha^{\Tj}_{g,j}\ge r+s\ge k+1,$
so no pair in such a block contributes. Hence only the boundary
blocks with $r+s=k$ can be cut by the line
$\alpha+\beta=k+1$.

For a pair in a boundary block, write
$\alpha^{\Tj}_{f,i}=r+\theta$ and
$\alpha^{\Tj}_{g,j}=s+\eta$, with
$0\le\theta,\eta<1$. Since $r+s=k$, we have
\[
\alpha^{\Tj}_{f,i}+\alpha^{\Tj}_{g,j}
=
k+\theta+\eta.
\]
Consequently, such a pair contributes to $C_{f,g}(k+1)$ exactly when
$k+\theta+\eta<k+1$, or equivalently when
\eqref{eq:thetaeta} holds. By the definition of
$B_k^{\mathrm{pair}}(f,g)$, this gives
\[
C_{f,g}(k+1)
=
L_k(f,g)+B_k^{\mathrm{pair}}(f,g).
\]

The boundary blocks contain altogether $\sum_{r+s=k}\Delta_r(f)\Delta_s(g)$ pairs. Hence
\[
0\le B_k^{\mathrm{pair}}(f,g)
\le
\sum_{r+s=k}\Delta_r(f)\Delta_s(g),
\]
which is \eqref{eq:Bpair-bound}. Since
\[
U_k(f,g)-L_k(f,g)
=
\sum_{r+s=k}\Delta_r(f)\Delta_s(g),
\]
we also obtain
\[
L_k(f,g)
\le
C_{f,g}(k+1)
\le
U_k(f,g).
\]

Now apply the cutoff formula \eqref{eq:cutoff}. Since
$m_k(F)=H_F(k+1)$, we have
\[
m_k(F)
=
C_{f,g}(k+1)+E_{f,g}(k+1).
\]
Substituting the preceding decomposition of $C_{f,g}(k+1)$ gives
\[
m_k(F)
=
L_k(f,g)
+
B_k^{\mathrm{pair}}(f,g)
+
E_{f,g}(k+1),
\]
which is \eqref{eq:integer-exact}. Finally,
\[
0\le B_k^{\mathrm{pair}}(f,g)
\le U_k(f,g)-L_k(f,g)
\]
and
\[
0\le E_{f,g}(k+1)\le\eps_{f,g},
\]
so
\[
L_k(f,g)
\le
m_k(F)
\le
U_k(f,g)+\eps_{f,g}.
\]
This proves \eqref{eq:integer-general}. The low-cutoff assertion follows
from \cref{cor:cutoff}, and the plane-curve formula follows from
\cref{cor:squarezero}.
\end{proof}

When $f\in J_f$ or $g\in J_g$, we have $K_{f,g}=0$, so $E_{f,g}\equiv0$ and \eqref{eq:integer-general} becomes the pure integer-data bound. The block sums may also be written as
\[
 L_k(f,g)=\sum_{r=0}^{k-1}\bigl(m_r(f)-m_{r-1}(f)\bigr)m_{k-1-r}(g),
\]
\(U_k(f,g)=\sum_{r=0}^{k}\bigl(m_r(f)-m_{r-1}(f)\bigr)m_{k-r}(g).\)
Geometrically, the integer data divide the $(\alpha,\beta)$-plane into unit squares. All squares with $r+s\le k-1$ are completely contributing, all squares with $r+s\ge k+1$ are noncontributing, and the diagonal squares $r+s=k$ contain exactly the missing fractional-position information measured by $B_k^{\mathrm{pair}}$.

\begin{remark}\label{rem:two-discrepancies}
The term $B_k^{\mathrm{pair}}$ measures positions inside unit intervals, while $E_{f,g}(k+1)$ measures the filtered kernel $K_{f,g}$. The first may be nonzero even for weighted-homogeneous factors; the second vanishes at every cutoff if $f\in J_f$ or $g\in J_g$.
\end{remark}

\subsection{The \texorpdfstring{$\Q$}{Q}-divisor version}
For $\lambda\in\Q\cap(0,1]$, define
\[
 m_{k,\lambda}(h):=\dim_{\C}\frac{R_h}{I_k(\lambda Z_h)+(h,J_h)},
 \qquad k\ge0,
\]
and put $m_{-1,\lambda}(h)=0$. By \cref{thm:counting},
\begin{equation}\label{eq:mlambda}
 m_{k,\lambda}(h)=\#\{i:\alpha^{\Tj}_{h,i}<k+\lambda\}.
\end{equation}
For $r\ge0$, set
\(\Delta_r^{(\lambda)}(h):=m_{r,\lambda}(h)-m_{r-1,\lambda}(h).\)
Then
\begin{equation}\label{eq:Delta-lambda}
 \Delta_r^{(\lambda)}(h)
 =\#\{i:r-1+\lambda\le\alpha^{\Tj}_{h,i}<r+\lambda\}.
\end{equation}

\begin{theorem}\label{thm:Qbounds}
For arbitrary $f,g,F$ as in \cref{sec:ST}, let
\[
 \lambda,\lambda_1,\lambda_2\in\Q\cap(0,1],
 \qquad \lambda_1+\lambda_2=\lambda+1.
\]
For $k\ge0$, set
\begin{equation}\label{eq:Q-LU}
\begin{aligned}
 L_k^{(\lambda_1,\lambda_2)}(f,g)
 &:=\sum_{r+s\le k-1}\Delta_r^{(\lambda_1)}(f)\Delta_s^{(\lambda_2)}(g),\\
 U_k^{(\lambda_1,\lambda_2)}(f,g)
 &:=\sum_{r+s\le k}\Delta_r^{(\lambda_1)}(f)\Delta_s^{(\lambda_2)}(g).
\end{aligned}
\end{equation}
Define the shifted boundary term
\begin{equation}\label{eq:Q-Bpair}
\begin{aligned}
 B_{k}^{(\lambda_1,\lambda_2),\mathrm{pair}}(f,g)
 :=\#\Bigl\{(i,j):\;&\text{for some }r,s\ge0\text{ with }r+s=k,\\
 &r-1+\lambda_1\le\alpha^{\Tj}_{f,i}<r+\lambda_1,\\
 &s-1+\lambda_2\le\alpha^{\Tj}_{g,j}<s+\lambda_2,\\
 &\alpha^{\Tj}_{f,i}+\alpha^{\Tj}_{g,j}<k+\lambda\Bigr\}.
\end{aligned}
\end{equation}
Then
\begin{equation}\label{eq:Q-exact}
 m_{k,\lambda}(F)
 =L_k^{(\lambda_1,\lambda_2)}(f,g)
 +B_{k}^{(\lambda_1,\lambda_2),\mathrm{pair}}(f,g)
 +E_{f,g}(k+\lambda),
\end{equation}
and hence
\begin{equation}\label{eq:Qbounds}
 L_k^{(\lambda_1,\lambda_2)}(f,g)
 \le m_{k,\lambda}(F)
 \le U_k^{(\lambda_1,\lambda_2)}(f,g)+E_{f,g}(k+\lambda)
 \le U_k^{(\lambda_1,\lambda_2)}(f,g)+\eps_{f,g}.
\end{equation}
Moreover,
\begin{equation}\label{eq:Q-Bpair-bound}
 0\le B_{k}^{(\lambda_1,\lambda_2),\mathrm{pair}}(f,g)
 \le\sum_{r+s=k}\Delta_r^{(\lambda_1)}(f)\Delta_s^{(\lambda_2)}(g)
 =U_k^{(\lambda_1,\lambda_2)}(f,g)-L_k^{(\lambda_1,\lambda_2)}(f,g).
\end{equation}
For a boundary block $r+s=k$, write
\[
 \alpha^{\Tj}_{f,i}=r-1+\lambda_1+\theta,
 \qquad
 \alpha^{\Tj}_{g,j}=s-1+\lambda_2+\eta,
 \qquad 0\le\theta,\eta<1.
\]
Such a pair contributes to the boundary term if and only if
\begin{equation}\label{eq:Q-thetaeta}
 \theta+\eta<1.
\end{equation}
If $k+\lambda\le a_f+a_g+1$, then $E_{f,g}(k+\lambda)=0$, so the exact formula
\eqref{eq:Q-exact} and the bounds \eqref{eq:Qbounds} are defect-free at that cutoff.
\end{theorem}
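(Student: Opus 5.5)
The plan follows the integer-bin argument of \cref{thm:integer-general} step by step, with the integer lattice replaced by the shifted grid determined by $\lambda_1,\lambda_2$. First apply the cutoff formula \cref{cor:cutoff} at $c=k+\lambda$. Since $m_{k,\lambda}(F)=H_F(k+\lambda)$ by \eqref{eq:mlambda} and \cref{thm:counting}, we get
\[
 m_{k,\lambda}(F)=C_{f,g}(k+\lambda)+E_{f,g}(k+\lambda).
\]
This reduces \eqref{eq:Q-exact} to the purely combinatorial identity
\[
 C_{f,g}(k+\lambda)=L_k^{(\lambda_1,\lambda_2)}(f,g)+B_k^{(\lambda_1,\lambda_2),\mathrm{pair}}(f,g).
\]

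To prove this identity, partition the pairs $(i,j)$ into shifted blocks
\[
 \mathcal B^{(\lambda_1,\lambda_2)}_{r,s}:=\{(i,j): r-1+\lambda_1\le\alpha^{\Tj}_{f,i}<r+\lambda_1,\ s-1+\lambda_2\le\alpha^{\Tj}_{g,j}<s+\lambda_2\},\qquad r,s\ge0.
\]
These blocks cover everything. Indeed, $\lambda_1\le1$ and all Tjurina exponents are positive, so $\alpha^{\Tj}_{f,i}\ge0>\lambda_1-1$; the intervals $[r-1+\lambda_1,r+\lambda_1)$ for $r\ge0$ therefore exhaust the exponents. The same holds for $g$. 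By \eqref{eq:Delta-lambda}, $\#\mathcal B^{(\lambda_1,\lambda_2)}_{r,s}=\Delta^{(\lambda_1)}_r(f)\Delta^{(\lambda_2)}_s(g)$.

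The key point is where the condition $\lambda_1+\lambda_2=\lambda+1$ enters. For a pair in block $(r,s)$, the sum $\alpha^{\Tj}_{f,i}+\alpha^{\Tj}_{g,j}$ lies in
\[
 [\,r+s-2+\lambda_1+\lambda_2,\ r+s+\lambda_1+\lambda_2)=[\,r+s-1+\lambda,\ r+s+1+\lambda).
\]
Three cases follow.
\begin{itemize}
 \item If $r+s\le k-1$, the upper end is at most $k+\lambda$, so every pair counts toward $C_{f,g}(k+\lambda)$. These blocks contribute exactly $L_k^{(\lambda_1,\lambda_2)}$.
 \item If $r+s\ge k+1$, the lower end is at least $k+\lambda$, so no pair counts.
 \item If $r+s=k$, write $\alpha^{\Tj}_{f,i}=r-1+\lambda_1+\theta$ and $\alpha^{\Tj}_{g,j}=s-1+\lambda_2+\eta$. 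The sum equals $k-1+\lambda+1+\theta+\eta=k+\lambda+\theta+\eta-1$, so the pair counts iff $\theta+\eta<1$. This is \eqref{eq:Q-thetaeta}, and the pairs so counted are exactly those in $B_k^{(\lambda_1,\lambda_2),\mathrm{pair}}$.
\end{itemize}
One should check that the block $(r,s)$ attached to a pair is unique, so the ``for some $r,s$'' in \eqref{eq:Q-Bpair} causes no overcounting. This holds because the intervals are disjoint.

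The remaining assertions then follow directly. The bound \eqref{eq:Q-Bpair-bound} holds because the boundary term counts a subset of the boundary blocks, whose total size is $U-L$. Combining this with $0\le E_{f,g}(k+\lambda)\le\eps_{f,g}$ from \cref{cor:cutoff} gives \eqref{eq:Qbounds}. The defect-free statement is the vanishing $E_{f,g}(c)=0$ for $c\le a_f+a_g+1$, also from \cref{cor:cutoff}.

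The only delicate step is the endpoint bookkeeping: the half-open interval conventions must match the strict cutoff in \cref{thm:counting}, and the $r=0$ interval must capture all small exponents. Both are handled by the positivity remark above.
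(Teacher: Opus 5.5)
Your proposal is correct and follows the paper's own proof essentially step for step: the cutoff formula at $c=k+\lambda$, the shifted blocks whose pairwise sums lie in $[r+s-1+\lambda,\,r+s+1+\lambda)$ because $\lambda_1+\lambda_2=\lambda+1$, the three-case analysis with the criterion $\theta+\eta<1$ on boundary blocks, and the bounds and defect-free statement taken from \cref{cor:cutoff}. There are two harmless slips to fix: the covering argument should read $\alpha^{\Tj}_{f,i}>0\ge\lambda_1-1$, since $\lambda_1=1$ is allowed, and the boundary sum is $k-2+(\lambda+1)+\theta+\eta$ rather than $k-1+\lambda+1+\theta+\eta$, although your final expression $k+\lambda+\theta+\eta-1$ is correct.
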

\begin{proof}
By \eqref{eq:mlambda} and the cutoff formula \eqref{eq:cutoff},
\[
 m_{k,\lambda}(F)
 =H_F(k+\lambda)
 =C_{f,g}(k+\lambda)+E_{f,g}(k+\lambda).
\]
Partition the pairwise sums of the two Tjurina spectra into the shifted blocks
\[
 r-1+\lambda_1\le\alpha^{\Tj}_{f,i}<r+\lambda_1,
 \qquad
 s-1+\lambda_2\le\alpha^{\Tj}_{g,j}<s+\lambda_2.
\]
By \eqref{eq:Delta-lambda}, the size of the $(r,s)$-block is
$\Delta_r^{(\lambda_1)}(f)\Delta_s^{(\lambda_2)}(g)$. Since
$\lambda_1+\lambda_2=\lambda+1$, every pair in this block has sum in
\([r+s+\lambda-1,\ r+s+\lambda+1).\)
If $r+s\le k-1$, the whole block lies strictly below $k+\lambda$ and therefore contributes.
If $r+s\ge k+1$, the whole block lies at or above $k+\lambda$ and contributes nothing.
Thus only the boundary blocks $r+s=k$ are cut by the line
$\alpha+\beta=k+\lambda$.

On such a boundary block,
\(\alpha^{\Tj}_{f,i}+\alpha^{\Tj}_{g,j} =k+\lambda-1+\theta+\eta,\)
so the strict cutoff condition is equivalent to $\theta+\eta<1$. It follows that
\[
 C_{f,g}(k+\lambda)
 =L_k^{(\lambda_1,\lambda_2)}(f,g)
 +B_{k}^{(\lambda_1,\lambda_2),\mathrm{pair}}(f,g),
\]
which together with the cutoff formula proves \eqref{eq:Q-exact}. The boundary term is
nonnegative and is bounded by the total size of the boundary blocks, giving
\eqref{eq:Q-Bpair-bound}. Combining this with
$0\le E_{f,g}(k+\lambda)\le\eps_{f,g}$ then proves \eqref{eq:Qbounds}. Finally,
\cref{cor:cutoff} gives $E_{f,g}(k+\lambda)=0$ whenever
$k+\lambda\le a_f+a_g+1$.
\end{proof}

\begin{corollary}\label{cor:Qbounds-defectfree}
Assume the hypotheses of \cref{cor:weightedTS}. With
$\lambda,\lambda_1,\lambda_2$ as in \cref{thm:Qbounds}, one has, for every $k\ge0$,
\begin{equation}\label{eq:Qbounds-defectfree}
 \sum_{r+s\le k-1}\Delta_r^{(\lambda_1)}(f)\Delta_s^{(\lambda_2)}(g)
 \le m_{k,\lambda}(F)
 \le
 \sum_{r+s\le k}\Delta_r^{(\lambda_1)}(f)\Delta_s^{(\lambda_2)}(g).
\end{equation}
More precisely,
\[
 m_{k,\lambda}(F)
 =L_k^{(\lambda_1,\lambda_2)}(f,g)
 +B_{k}^{(\lambda_1,\lambda_2),\mathrm{pair}}(f,g),
\]
and the difference between the upper and lower bounds in
\eqref{eq:Qbounds-defectfree} is
\[
 \sum_{r+s=k}\Delta_r^{(\lambda_1)}(f)\Delta_s^{(\lambda_2)}(g).
\]
For a boundary block $r+s=k$, the contribution criterion is again
$\theta+\eta<1$ in the notation of \cref{thm:Qbounds}. The analogous statement holds
with $f$ and $g$ interchanged.
\end{corollary}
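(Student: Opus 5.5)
The plan is to specialize \cref{thm:Qbounds} to the case where $f$ is weighted homogeneous. The only new input is that the defect term vanishes identically.

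First, by \cref{cor:weightedTS}, weighted homogeneity gives $f\in J_f$, hence $u=[f]\otimes1=0$ and $K_{f,g}=0$. Consequently $V^cK_{f,g}=0$ and $E_{f,g}(c)=\dim_{\C}K_{f,g}/V^cK_{f,g}=0$ for every $c\in\Q_{>0}$. In particular $E_{f,g}(k+\lambda)=0$ for all $k\ge0$ and all $\lambda\in\Q\cap(0,1]$. Equivalently, by \cref{thm:filtered-defect}, $\eps_{f,g}=0$ and $0\le E_{f,g}\le\eps_{f,g}$ forces the same conclusion.

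Next, substitute this into the exact formula \eqref{eq:Q-exact} of \cref{thm:Qbounds}. This yields
\[
 m_{k,\lambda}(F)=L_k^{(\lambda_1,\lambda_2)}(f,g)+B_{k}^{(\lambda_1,\lambda_2),\mathrm{pair}}(f,g).
\]
The chain \eqref{eq:Qbounds} then collapses to \eqref{eq:Qbounds-defectfree}, because its middle term $U_k^{(\lambda_1,\lambda_2)}+E_{f,g}(k+\lambda)$ equals $U_k^{(\lambda_1,\lambda_2)}$. The difference between the upper and lower bounds is $U_k^{(\lambda_1,\lambda_2)}-L_k^{(\lambda_1,\lambda_2)}=\sum_{r+s=k}\Delta_r^{(\lambda_1)}(f)\Delta_s^{(\lambda_2)}(g)$, directly from the definitions \eqref{eq:Q-LU}; this is also recorded in \eqref{eq:Q-Bpair-bound}. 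The boundary criterion $\theta+\eta<1$ is unchanged, since it concerns only the combinatorics of $C_{f,g}(k+\lambda)$ and is already established in \cref{thm:Qbounds}.

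If one wants to use the weighted structure more explicitly, \cref{cor:weightedTS} also allows replacing $\Delta^{(\lambda_1)}_r(f)$ by the corresponding counts of Steenbrink exponents of $f$, since $\Sp_f^{\Tj}=\Sp_f$. This is a remark rather than a step the proof needs. Finally, the interchanged statement follows by the symmetric argument: if $g$ is weighted homogeneous, then $v=0$ and again $K_{f,g}=0$, while the definitions of $L$, $U$ and $B$ are symmetric under swapping $(f,\lambda_1)$ with $(g,\lambda_2)$.

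There is no real obstacle here. The only point to check is that $E_{f,g}$ vanishes at every cutoff, not merely below $a_f+a_g+1$, and this holds because $K_{f,g}$ itself is zero.
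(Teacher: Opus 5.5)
Your proposal is correct and follows the paper's proof: weighted homogeneity of one factor forces $K_{f,g}=0$, hence $E_{f,g}(c)=0$ at every cutoff, and all assertions then follow directly from \cref{thm:Qbounds}. Your symmetric treatment of the interchanged case ($v=0$ gives $K_{f,g}=vA/vA=0$) matches the paper's remark that the argument is identical.
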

\begin{proof}
Under the hypotheses of \cref{cor:weightedTS}, one has $K_{f,g}=0$, hence
$E_{f,g}(c)=0$ for every cutoff $c$. The result follows immediately from
\cref{thm:Qbounds}.
\end{proof}

When $\lambda=1$, the condition $\lambda_1+\lambda_2=2$ forces
$\lambda_1=\lambda_2=1$. In this case, \cref{thm:Qbounds} recovers the arbitrary-factor
integer-bin theorem \cref{thm:integer-general}, while
\cref{cor:Qbounds-defectfree} is its defect-free specialization.

\subsection{Finite rational sampling and finer grids}
The integer sequence records only counts in unit intervals. Rational Hodge--Tjurina numbers give an exact finite remedy once a common denominator is known.

\begin{proposition}\label{prop:rational-recovery}
Let $h$ be an isolated hypersurface singularity in $d$ variables with $d\ge2$. Assume its Tjurina exponents belong to $D^{-1}\Z$, where $D$ is a positive integer. Set
\begin{equation}\label{eq:qD}
 q_a^{(D)}(h):=H_h\left(\frac{a+1}{D}\right)-H_h\left(\frac aD\right),
 \qquad 0\le a\le dD-1.
\end{equation}
Set $q_a^{(D)}(h)=0$ for $a\notin\{0,\ldots,dD-1\}$; this convention is needed in the convolution sum below when $q$ is large.
Then
\begin{equation}\label{eq:rational-recovery}
 \Sp_h^{\Tj}(t)=\sum_{a=0}^{dD-1}q_a^{(D)}(h)t^{a/D}.
\end{equation}
Thus finitely many Hodge--Tjurina numbers with $\lambda\in\{1/D,\ldots,D/D\}$ determine the entire Tjurina spectrum. If both factors satisfy the denominator condition and ($f\in J_f$ or $g\in J_g$), then for every integer $q>0$,
\begin{equation}\label{eq:rational-product}
 H_F(q/D)=\sum_{a+b\le q-1}q_a^{(D)}(f)q_b^{(D)}(g).
\end{equation}
If both factors satisfy the same denominator condition but no weighted-homogeneity assumption is imposed, then the right-hand side is $C_{f,g}(q/D)$ and
\(H_F(q/D)=C_{f,g}(q/D)+E_{f,g}(q/D).\)
\end{proposition}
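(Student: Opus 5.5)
The plan is to read everything off \cref{thm:counting} and \cref{cor:cutoff}. First I will confine the exponents to a range. The Tjurina exponents are graded pieces of a quotient of $A_h$, so each one is a Steenbrink exponent. I will use that Steenbrink exponents of an isolated singularity in $d$ variables lie in the open interval $(0,d)$. This is the standard symmetry/positivity range of the spectrum, quoted from \cite{Steenbrink77,Steenbrink89} in our positive convention. Combined with the denominator hypothesis, every $\alpha^{\Tj}_{h,i}$ lies in $\{1/D,\ldots,(dD-1)/D\}$.

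By \cref{thm:counting}, $H_h(c)=\#\{i:\alpha^{\Tj}_{h,i}<c\}$ for $c>0$, and $H_h(0)=0$ by convention. Hence, for $0\le a\le dD-1$,
\[
 q_a^{(D)}(h)=\#\{i:a/D\le\alpha^{\Tj}_{h,i}<(a+1)/D\}.
\]
Because the exponents lie in $D^{-1}\Z$, the only possible value in this half-open interval is $a/D$. So $q_a^{(D)}(h)$ is exactly the multiplicity of $a/D$. Summing over $a$ covers all exponents by the range statement, which gives \eqref{eq:rational-recovery}. The cutoffs used are $c=a/D$ with $1\le a\le dD$. Writing $c=p+\lambda$ with $\lambda\in\{1/D,\ldots,D/D\}$ shows that only finitely many Hodge--Tjurina numbers are needed.

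For the product formulas, I will apply \cref{cor:cutoff} at $c=q/D$, which gives $H_F(q/D)=C_{f,g}(q/D)+E_{f,g}(q/D)$. The multiplicity description above shows that the pairwise sums $\alpha^{\Tj}_{f,i}+\alpha^{\Tj}_{g,j}$ take the value $(a+b)/D$ with multiplicity $q_a^{(D)}(f)q_b^{(D)}(g)$. All these sums lie in $D^{-1}\Z$, so the condition $(a+b)/D<q/D$ is equivalent to $a+b\le q-1$. Therefore
\[
 C_{f,g}(q/D)=\sum_{a+b\le q-1}q_a^{(D)}(f)q_b^{(D)}(g).
\]
The zero convention for $q_a^{(D)}$ outside $\{0,\ldots,dD-1\}$ keeps this sum finite and correct for large $q$. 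This proves the last assertion.

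If $f\in J_f$ or $g\in J_g$, then $K_{f,g}=0$ by \cref{thm:filtered-defect}. Hence $E_{f,g}\equiv0$, and \eqref{eq:rational-product} follows.

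The only step not already immediate from earlier results is the range statement $\alpha^{\Tj}_{h,i}\in(0,d)$. Positivity is already built into the positive-exponent convention. For the upper bound, I would first cite Steenbrink's spectrum symmetry $\alpha\mapsto d-\alpha$ together with positivity. Alternatively, one can observe that the sum in \eqref{eq:rational-recovery} may be taken over any large enough range of $a$ without changing anything. With that reading, the bound $dD-1$ is only a convenient truncation.
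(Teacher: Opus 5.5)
Your proposal is correct and follows the paper's proof essentially step for step. Like the paper, you confine the Tjurina exponents to $(0,d)$ via the Steenbrink range, where the paper simply cites the JKSY subspectrum statement. You then read $q_a^{(D)}(h)$ off \cref{thm:counting} as the multiplicity of $a/D$, and you obtain both product identities from \cref{cor:cutoff} together with the vanishing of $K_{f,g}$ when $f\in J_f$ or $g\in J_g$.
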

\begin{proof}
The Tjurina spectrum is a subspectrum of the Steenbrink spectrum \cite[Introduction, (0.4)]{JKSY}, whose exponents lie in $(0,d)$. By \eqref{eq:count-intro}, $q_a^{(D)}(h)$ counts exponents in $[a/D,(a+1)/D)$. Under the lattice assumption, this interval contains exactly one possible exponent, namely $a/D$, proving \eqref{eq:rational-recovery}. Every grid value is an allowed Hodge quotient: for $c=a/D>0$, take $p=\lceil c\rceil-1$ and $\lambda=c-p\in\{1/D,\ldots,1\}$. For lattice exponents,
\(\frac aD+\frac bD<\frac qD\iff a+b\le q-1.\)
The product case follows from \cref{thm:filtered-defect} and its vanishing criterion. Under the same denominator hypothesis, the final identity follows from \cref{cor:cutoff}.
\end{proof}

The denominator hypothesis can be dropped at the cost of two nested approximations.

\begin{proposition}\label{prop:refined-grid}
For any positive integer $D$, define $q_a^{(D)}(h)$ by \eqref{eq:qD}, extended by zero outside its finite range. For $c=q/D>0$, put
\[
 L_D(c)=\sum_{a+b\le q-2}q_a^{(D)}(f)q_b^{(D)}(g),
 \qquad
 U_D(c)=\sum_{a+b\le q-1}q_a^{(D)}(f)q_b^{(D)}(g).
\]
Then
\begin{equation}\label{eq:refined-bounds}
 L_D(c)\le C_{f,g}(c)\le U_D(c),
 \qquad
 L_D(c)\le H_F(c)\le U_D(c)+\eps_{f,g}.
\end{equation}
If the grid is subdivided from $D$ to $eD$ with $e\in\Z_{>0}$, then
\begin{equation}\label{eq:nested-grid}
 L_D(c)\le L_{eD}(c)\le C_{f,g}(c)\le U_{eD}(c)\le U_D(c).
\end{equation}
\end{proposition}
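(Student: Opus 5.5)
The plan is to reduce everything to counting pairs of Tjurina exponents in half-open grid cells, as in the proofs of \cref{thm:integer-general,thm:Qbounds}, and then to invoke \cref{cor:cutoff} for the defect.

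\textbf{Step 1: interpret the grid counts.} Every grid value $a/D>0$ is an admissible Hodge cutoff: write $p=\lceil a/D\rceil-1$ and $\lambda=a/D-p\in\{1/D,\ldots,1\}$, exactly as in the proof of \cref{prop:rational-recovery}. By \cref{thm:counting},
\[
q_a^{(D)}(h)=\#\{i:\ a/D\le\alpha^{\Tj}_{h,i}<(a+1)/D\}.
\]
Since the Tjurina exponents lie in $(0,d)$, the half-open cells with $0\le a\le dD-1$ partition the whole Tjurina spectrum, and the zero extension costs nothing. Consequently, partitioning the pairs $(i,j)$ into cells
\[
\mathcal B_{a,b}=\{(i,j):\ \alpha^{\Tj}_{f,i}\in[a/D,(a+1)/D),\ \alpha^{\Tj}_{g,j}\in[b/D,(b+1)/D)\}
\]
gives $\#\mathcal B_{a,b}=q_a^{(D)}(f)q_b^{(D)}(g)$.

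\textbf{Step 2: the first chain in \eqref{eq:refined-bounds}.} Every pair in $\mathcal B_{a,b}$ has sum in $[(a+b)/D,(a+b+2)/D)$.
\begin{itemize}
\item If $a+b\le q-2$, the whole cell lies strictly below $c=q/D$, so it contributes fully to $C_{f,g}(c)$.
\item If $a+b\ge q$, the whole cell lies at or above $c$, so it contributes nothing.
\end{itemize}
Only the diagonal cells $a+b=q-1$ can be cut by the line. Hence $C_{f,g}(c)$ equals $L_D(c)$ plus a partial count over these diagonal cells. That partial count lies between $0$ and $U_D(c)-L_D(c)$, which gives $L_D(c)\le C_{f,g}(c)\le U_D(c)$.

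\textbf{Step 3: the second chain in \eqref{eq:refined-bounds}.} By \cref{cor:cutoff}, $H_F(c)=C_{f,g}(c)+E_{f,g}(c)$ with $0\le E_{f,g}(c)\le\eps_{f,g}$. Combining this with Step 2 yields $L_D(c)\le H_F(c)\le U_D(c)+\eps_{f,g}$.

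\textbf{Step 4: nesting under refinement.} Write $c=q/D=eq/(eD)$. A coarse cell $\mathcal B_{a,b}$ at resolution $D$ is the disjoint union of the fine cells $\mathcal B'_{a',b'}$ at resolution $eD$ with $ea\le a'\le ea+e-1$ and $eb\le b'\le eb+e-1$. So each of $L$ and $U$ is a count of pairs over a union of cells, and it suffices to compare these sets of pairs.
\begin{itemize}
\item Lower side: if $a+b\le q-2$, then every sub-cell satisfies $a'+b'\le e(a+b)+2e-2\le eq-2$. Thus the pairs counted by $L_D(c)$ are among those counted by $L_{eD}(c)$, giving $L_D(c)\le L_{eD}(c)$.
\item Upper side: if $a'+b'\le eq-1$, then $e(a+b)\le a'+b'\le eq-1<eq$, hence $a+b\le q-1$. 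So the pairs counted by $U_{eD}(c)$ are among those counted by $U_D(c)$, giving $U_{eD}(c)\le U_D(c)$.
\end{itemize}
The middle inequalities $L_{eD}(c)\le C_{f,g}(c)\le U_{eD}(c)$ are Step 2 applied at resolution $eD$.

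The only delicate point is the bookkeeping that all exponents fall into the indexed cells. This is ensured by the range $(0,d)$ together with the zero-extension convention. Beyond that, there is no genuine obstacle: the defect enters only through the already established bound $E_{f,g}\le\eps_{f,g}$.
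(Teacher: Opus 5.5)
Your proposal is correct and follows essentially the same route as the paper: partition pairs of Tjurina exponents into half-open grid cells, note that only the diagonal cells $a+b=q-1$ can be cut by the strict cutoff at $q/D$, compare coarse and fine cells under subdivision, and then add the defect via \cref{cor:cutoff}. Your explicit index bookkeeping (fine indices $a'\in[ea,ea+e-1]$ and the inequalities $a'+b'\le eq-2$, $a+b\le q-1$) just makes precise the cell inclusions that the paper states in words.
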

\begin{proof}
The number $q_a^{(D)}(h)$ counts exponents in the half-open interval $[a/D,(a+1)/D)$ with no lattice assumption. A pair in bins $(a,b)$ has sum in
\(\left[\frac{a+b}{D},\frac{a+b+2}{D}\right).\)
Bins with $a+b\le q-2$ therefore contribute completely, whereas any bin containing a contributing pair must satisfy $a+b\le q-1$. This establishes the first pair of inequalities. After subdivision, each old interval is the disjoint union of $e$ new intervals: a new block inside a completely contributing old block remains completely contributing, and every potentially contributing new block lies inside a potentially contributing old block. These inclusions give \eqref{eq:nested-grid} and the bounds for $H_F(c)$ then follow from \eqref{eq:cutoff}.
\end{proof}

\subsection{A symmetry bound for weighted-homogeneous self sums}
The preceding results concern the recovery of spectral information from rational cutoff data. We now turn to a complementary consequence of spectral symmetry in the weighted-homogeneous case, which yields a sharp bound for self sums.

\begin{proposition}\label{prop:central-cutoff}
Let $h$ be weighted homogeneous in $d$ variables, with isolated singularity and Milnor number $\mu$. For every spectral value $a$, let $n_a$ denote its multiplicity. Here we write $h\boxplusop h$ for the self sum in two disjoint variable sets. Then
\begin{equation}\label{eq:central-cutoff}
 m_{d-1}(h\boxplusop h)
 =\frac{\mu^2-\sum_a n_a^2}{2}
 \le\frac{\mu(\mu-1)}2.
\end{equation}
Equality in the upper bound holds if and only if all spectral exponents of $h$ are distinct.
\end{proposition}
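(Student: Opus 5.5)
Since $h$ is weighted homogeneous, we have $h\in J_h$. \cref{cor:weightedTS} (with $f=g=h$) then gives $\Sp^{\Tj}_{h\boxplusop h}(t)=\Sp_h(t)^2$. The Tjurina exponents of $h\boxplusop h$ are therefore exactly the $\mu^2$ pairwise sums $\alpha_i+\alpha_j$, where $\alpha_1,\ldots,\alpha_\mu$ are the spectral numbers of $h$ listed with multiplicity. By \cref{cor:mk-count} with $k=d-1$,
\[
 m_{d-1}(h\boxplusop h)=\#\{(i,j):\alpha_i+\alpha_j<d\}.
\]
So the statement reduces to a counting problem for ordered pairs of spectral numbers.

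The key input is the symmetry of the Steenbrink spectrum: $\alpha\mapsto d-\alpha$ preserves the multiset of exponents, i.e.\ $n_a=n_{d-a}$. In the weighted-homogeneous case this can be checked directly from \cref{rem:rho}. Take the monomial basis of $A_h$ with levels $\rho(v_j)=\ell_w(\nu_j)$. The Hessian (socle) class has weighted degree $\sum_i(1-2w_i)$, and the residue pairing $A_h\times A_h\to\C$ is graded and perfect. A basis monomial $\bx^\nu$ of weighted degree $e$ therefore pairs with one of degree $\sum(1-2w_i)-e$, so the levels satisfy $\rho+\rho'=\sum_i(1-2w_i)+2\sum w_i=d$. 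Hence the Poincaré series $\sum_a n_a t^a$ is symmetric under $a\mapsto d-a$. I would cite this from \cite{Steenbrink77} but also record the graded-duality argument.

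Now I would count, with $N=\mu^2$. The involution $(i,j)\mapsto(i',j')$, where $\alpha_{i'}=d-\alpha_i$ and $\alpha_{j'}=d-\alpha_j$ under a fixed multiplicity-preserving bijection, exchanges the sets $\{\alpha_i+\alpha_j<d\}$ and $\{\alpha_i+\alpha_j>d\}$. These two sets are thus equinumerous, and
\[
 m_{d-1}=\frac{\mu^2-\#\{(i,j):\alpha_i+\alpha_j=d\}}{2}.
\]
By symmetry, the number of pairs with $\alpha_i+\alpha_j=d$ equals $\sum_a n_a n_{d-a}=\sum_a n_a^2$, which gives the equality in \eqref{eq:central-cutoff}. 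For the bound, $\sum_a n_a^2\ge\sum_a n_a=\mu$, with equality if and only if every $n_a\in\{0,1\}$, i.e.\ all exponents are distinct. This yields $m_{d-1}\le\mu(\mu-1)/2$ together with the equality criterion.

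The only nonformal step is the symmetry $n_a=n_{d-a}$ in the positive-exponent convention, specifically checking that the center is $d/2$ (so pairs balance at $d$). I would verify this on $x^2+y^5$ from the earlier example: the exponents $7/10,9/10,11/10,13/10$ are symmetric about $1=d/2$, which confirms the normalization.
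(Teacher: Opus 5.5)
Your proposal is correct and follows the paper's proof essentially step for step. You use \cref{cor:weightedTS} to get $\Sp^{\Tj}_{h\boxplusop h}=\Sp_h^2$, take the strict count from \cref{cor:mk-count}, use the symmetry $n_a=n_{d-a}$ to match sums below $d$ with sums above $d$, and finish with $\sum_a n_a^2\ge\mu$.

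The one difference is how you obtain the symmetry. You sketch it yourself from the graded Gorenstein duality of $A_h$ with socle degree $\sum_i(1-2w_i)$. This yields $\dim(A_h)_e=\dim(A_h)_{s-e}$ rather than a literal pairing of monomials, but that is all you need. The paper simply cites \cite[(1.2)]{JKSY}.
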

\begin{proof}
Because $h$ is weighted homogeneous, its Tjurina and Steenbrink spectra agree and the
filtered defect vanishes. Hence the self-sum spectrum is the ordinary product
\(\Sp_{h\boxplusop h}^{\Tj}(t)=\Sp_h(t)^2.\)
Thus the $\mu^2$ spectral exponents of the self sum are precisely the ordered pairwise sums
of the $\mu$ spectral exponents of $h$.

The spectrum of $h$ is symmetric: $n_a=n_{d-a}$ \cite[(1.2)]{JKSY}. Equivalently, if
we list the spectral exponents in nondecreasing order
\(\alpha_1\le\alpha_2\le\cdots\le\alpha_\mu,\)
then, with multiplicity,
\begin{equation}\label{eq:ordered-symmetry}
 \alpha_i+\alpha_{\mu+1-i}=d,
 \qquad 1\le i\le\mu.
\end{equation}
This makes the symmetry of the ordered pairwise sums about $d$ explicit: a pair of values
$a,b$ is matched with the pair $d-a,d-b$, and the two sums add to $2d$.

The only pairs fixed at the central cutoff are those with sum exactly $d$. Their number is
\(\sum_a n_an_{d-a}=\sum_a n_a^2.\)
The remaining
\(\mu^2-\sum_a n_a^2\)
ordered pairs therefore occur in equal numbers below and above $d$. By
\cref{cor:mk-count},
\(m_{d-1}(h\boxplusop h)=H_{h\boxplusop h}(d)\)
counts the self-sum spectral exponents strictly below $d$; equivalently, by definition it is
\[
 \dim_{\C}
 \frac{R_{h\boxplusop h}}
 {I_{d-1}(Z_{h\boxplusop h})+(h\boxplusop h,J_{h\boxplusop h})}.
\]
Because the cutoff is strict, none of the $\sum_an_a^2$ central pairs are counted. Hence
exactly half of the remaining pairs contribute, giving
\(m_{d-1}(h\boxplusop h) =\frac{\mu^2-\sum_an_a^2}{2}.\)
Finally,
\(\sum_an_a^2\ge\sum_an_a=\mu,\)
so
\(m_{d-1}(h\boxplusop h)\le\frac{\mu(\mu-1)}2.\)
Equality holds exactly when $n_a^2=n_a$ for every $a$, i.e. exactly when every nonzero
multiplicity is one and all spectral exponents are distinct.
\end{proof}

For both $x^2+y^{13}$ and $x^3+y^7$, the twelve spectral exponents are distinct, so the central self-sum value is $m_1=66$. The unit-bin upper bound is $108$, so spectral symmetry strictly improves the elementary integer-bin estimate.

\section{Nonrecoverability from integer Hodge--Tjurina data}\label{sec:nonrecover}

The preceding bounds use only the integer sequence $\{m_k\}$, whereas exact Thom--Sebastiani counts also depend on the fractional spectral positions. The family below shows that the missing information cannot in general be recovered from the integer sequence. We write $h\boxplusop h$ for the self sum in two disjoint variable sets.

\begin{example}\label{ex:unbounded}
For every integer $r\ge2$, put
\(f_r=x^2+y^{6r+1}\) and \(g_r=x^3+y^{3r+1}\).
Both germs are isolated plane curve singularities and
\begin{equation}\label{eq:same-sequence}
 m_0(f_r)=m_0(g_r)=3r,
 \qquad
 m_k(f_r)=m_k(g_r)=6r\quad(k\ge1).
\end{equation}
For their self sums in disjoint variables,
\begin{equation}\label{eq:selfsum-discrepancy}
 m_0(f_r\boxplusop f_r)=0,
 \qquad
 m_0(g_r\boxplusop g_r)=\frac{r(r-1)}2.
\end{equation}
Accordingly, for every integer $B>0$ one can choose two such germs with identical integer Hodge--Tjurina sequences but with zeroth self-sum numbers differing by more than $B$.
\end{example}
\begin{proof}
For a Brieskorn--Pham plane curve $x^a+y^b$, the spectral exponents in our convention are
\(\alpha_{i,j}=\frac ia+\frac jb\) for \(1\le i<a\) and \(1\le j<b\).
In the two families under consideration,
\(\gcd(2,6r+1)=\gcd(3,3r+1)=1.\)
Hence both germs are reduced and isolated, and
\(\mu(f_r)=(2-1)(6r+1-1)=6r\) and \(\mu(g_r)=(3-1)(3r+1-1)=6r\).
No spectral exponent can equal $1$. Indeed, if
\(\frac ia+\frac jb=1,\)
then $bi+aj=ab$, so reducing modulo $a$ gives $bi\equiv0\pmod a$. Since
$\gcd(a,b)=1$, one obtains $a\mid i$, which is impossible for $1\le i<a$.

The involution
\((i,j)\longmapsto(a-i,b-j)\)
sends
\(\alpha_{i,j}\longmapsto \alpha_{a-i,b-j}=2-\alpha_{i,j}.\)
Thus it pairs exponents below $1$ with exponents above $1$. Since no exponent equals
$1$, exactly half of the $6r$ exponents lie below $1$, namely $3r$, while all $6r$ lie
below $2$. By \cref{cor:mk-count},
\(m_0(f_r)=m_0(g_r)=3r\) and \(m_k(f_r)=m_k(g_r)=6r\) for \(k\ge1\),
which proves \eqref{eq:same-sequence}.

For later comparison, it is useful to write the spectrum of $f_r$ explicitly:
\(\alpha_j=\frac12+\frac{j}{6r+1}\) for \(1\le j\le6r\).
Its minimal spectral exponent is therefore
\(\frac12+\frac1{6r+1}.\)
Consequently the smallest spectral exponent of the self sum $f_r\boxplusop f_r$ is
\(1+\frac2{6r+1}>1.\)
Hence no self-sum exponent is below $1$, and therefore
\(m_0(f_r\boxplusop f_r)=0.\)
The same conclusion follows directly from the weighted spectral-level description in
\cref{rem:rho}: the constant monomial already has level
\(\frac12+\frac1{6r+1}+\frac12+\frac1{6r+1} =1+\frac2{6r+1}>1.\)

Now consider
\(g_r\boxplusop g_r=x^3+y^{3r+1}+u^3+v^{3r+1}.\)
A Milnor-algebra basis is
\[
 x^{i-1}y^{j-1}u^{a-1}v^{b-1},
 \qquad 1\le i,a\le2,\quad1\le j,b\le3r.
\]
The weighted spectral level of such a basis monomial is
\(\frac{i+a}{3}+\frac{j+b}{3r+1}.\)
To be strictly below $1$, the first term must be $2/3$, so necessarily $i=a=1$.
The remaining condition is
\[
 \frac23+\frac{j+b}{3r+1}<1
 \iff
 \frac{j+b}{3r+1}<\frac13
 \iff
 j+b<r+\frac13
 \iff
 j+b\le r,
\]
where the last equivalence uses that $j+b$ is an integer. Thus the desired basis monomials
are parametrized by the lattice points
\(j\ge1\), \(b\ge1\), and \(j+b\le r\).
For a fixed sum $s=j+b$, with $2\le s\le r$, there are exactly
\((1,s-1),(2,s-2),\ldots,(s-1,1),\)
namely $s-1$ ordered positive pairs. The original bounds $j,b\le3r$ are automatic on this
triangle. Therefore
\[
 m_0(g_r\boxplusop g_r)
 =\sum_{s=2}^r(s-1)
 =1+2+\cdots+(r-1)
 =\frac{r(r-1)}2.
\]
This proves \eqref{eq:selfsum-discrepancy}; its right-hand side tends to infinity with $r$.
\end{proof}

\paragraph{The first member.}
For $r=2$, the family gives $f=x^2+y^{13}$ and $g=x^3+y^7$, with common integer
sequence $m_0=6$ and $m_k=12$ for $k\ge1$, but self-sum values $0$ and $1$ at $m_0$.
\cref{prop:central-cutoff} gives in both cases $m_1=66$, so the discrepancy
occurs strictly below the central cutoff.

\begin{remark}
The unboundedness in \cref{ex:unbounded} occurs along the sequence of Milnor numbers $\mu=6r$. It does not assert unbounded discrepancy at a fixed Milnor number.
\end{remark}

\noindent\textbf{Declaration on the Use of Al Tools.} \Cref{thm:filtered-defect,prop:rational-recovery,prop:refined-grid,ex:unbounded} were found with AI assistance. AI tools were used for language editing, including grammar, syntax, and stylistic refinement, and for limited assistance with presentation consistency. The AI tool used was ChatGPT. All AI-generated content has been checked by the authors, who take full responsibility for the final manuscript.

\end{document}